\documentclass[11pt]{article}
%
%
\usepackage{graphicx}
\usepackage{url}
\usepackage{amsfonts,amsthm,amsmath}
\usepackage{amssymb}
\usepackage{authblk}
\usepackage[top=1.25in, bottom=1.5in, left=1.5in, right=1.5in]{geometry}
\usepackage{tikz}
\usepackage{xcolor}

\newtheorem{Theorem}{Theorem}[section]

\newtheorem{Example}{Example}[section]

\newtheorem{Lemma}[Theorem]{Lemma}
\newtheorem{Corollary}[Theorem]{Corollary}	
	
\newtheorem{Construction}[Theorem]{Construction}

\newcommand{\zed}{{\ensuremath{\mathbb{Z}}}}

\newcommand{\QQ}{{\ensuremath{\mathcal{Q}}}} 
\newcommand{\vv}{{\ensuremath{\mathbf{v}}}} 
\newcommand{\ww}{{\ensuremath{\mathbf{w}}}}

\newcommand{\DL}{{\ensuremath{\mathsf{DL}}}}

\newcommand{\bx}{\mathbf{x}}
\newcommand{\by}{\mathbf{y}}

\newcommand{\tbl}[1]{\textcolor{blue}{#1}}
\newcommand{\trd}[1]{\textcolor{red}{#1}}

\title{Dispersed graph labellings}
\author[1]{William J.\ Martin\thanks{W.J.\ Martin’s research is supported by NSF DMS Award \#1808376.}}
\author[2]{Douglas R.\ Stinson\thanks{D.R.\ Stinson's research is supported by  NSERC discovery grant RGPIN-03882.}}
\affil[1]{Department of Mathematical Sciences\\Worcester Polytechnic Institute\\Worcester MA, 01609\\USA}
\affil[2]{David R.\ Cheriton School of Computer Science\\University of Waterloo\\ Waterloo ON, N2L 3G1\\Canada}

\date{\today}

\begin{document}

\maketitle

\begin{abstract}
A {$k$-dispersed labelling} of a graph $G$ on $n$ vertices is  a labelling of the vertices of $G$ by the integers $1, \dots , n$ such that $d(i,i+1) \geq k$ for $1 \leq i \leq n-1$. $\mathsf{DL}(G)$ denotes the maximum value of $k$ such that $G$ has a $k$-dispersed labelling. In this paper, we study upper and lower bounds on $\mathsf{DL}(G)$. Computing $\mathsf{DL}(G)$ is \textsf{NP}-hard. However, we determine the exact value 
of $\mathsf{DL}(G)$ for cycles, paths, grids, hypercubes and complete binary trees. We also give a product construction and we prove a degree-based bound.
\end{abstract}

\section{Introduction}

Many graph labelling problems have been studied over the years, starting with the graceful labellings introduced by Rosa. 
Gallian's  dynamic survey \cite{Gal} is an excellent starting point for this area of research. We assume standard graph-theoretic terminology throughout this paper, e.g., as defined in \cite{BM}.

Let $G$ be a graph having vertex set $V$, where $|V| = n$. 
Let $d(u,v)$ denote the distance between any two vertices $u$ and $v$ in $G$.
It is easy to observe that $G$ has a hamiltonian path if and only if there is a labelling of the vertices with the integers $1, \dots , n$ such that $d(i,i+1) = 1$ for $1 \leq i \leq n-1$.  Here we consider a labelling problem motivated
by the requirement that consecutively labelled vertices should be far apart.
Thus we define a \emph{$k$-dispersed labelling} to be a labelling of the vertices of $G$ by the integers $1, \dots , n$ such that $d(i,i+1) \geq k$ for $1 \leq i \leq n-1$. Equivalently, for a graph $G = (V,E)$ with $|V| = n$, we could define a $k$-dispersed labelling to be a bijection
$\phi : \{1, \dots , n\} \rightarrow V$ such that $d(\phi(i),\phi(i+1)) \geq k$ for $1 \leq i \leq n-1$.

Although it is not a main topic of this paper, we could also consider a ``circular'' variant of the above definition. 
We define a \emph{$k$-circular-dispersed labelling} to be a $k$-dispersed labelling that satisfies the additional property that $d(n,1) \geq k$.
Equivalently, a  $k$-circular-dispersed labelling could be defined to be a bijection
$\phi : \zed_n \rightarrow V$ such that $d(\phi(i),\phi(i+1)) \geq k$ for $0 \leq i \leq n-1$ 
(in this definition, for convenience, the vertices are labelled with the elements of
$\zed_n$).

As an example, we present a $4$-dispersed labelling of the $2 \times 7$ grid graph in Figure \ref{fig2,7}
(this graph is denoted as $L_{2,7}$).
This labelling is also a $4$-circular-dispersed labelling, because $d(14,1) = 4$.

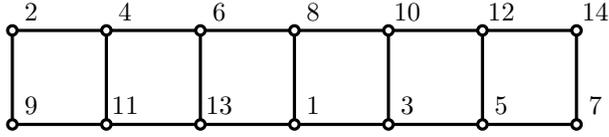
\begin{figure}
\begin{center}


\begin{tikzpicture}[scale=0.25]

\draw [very thick] (0,0) -- (5,0) -- (10,0) -- (15,0) -- (20,0) -- (25,0) -- (30,0);
\draw [very thick] (0,5) -- (5,5) -- (10,5) -- (15,5) -- (20,5) -- (25,5) -- (30,5);

\draw [very thick] (0,0) -- (0,5);
\draw [very thick] (5,0) -- (5,5);
\draw [very thick] (10,0) -- (10,5);
\draw [very thick] (15,0) -- (15,5);
\draw [very thick] (20,0) -- (20,5);
\draw [very thick] (25,0) -- (25,5);
\draw [very thick] (30,0) -- (30,5);

\draw [very thick,fill = white] (0,0) circle [radius=.25];
\draw [very thick,fill = white] (0,5) circle [radius=.25];
\draw [very thick,fill = white] (10,0) circle [radius=.25];
\draw [very thick,fill = white] (10,5) circle [radius=.25];
\draw [very thick,fill = white] (5,0) circle [radius=.25];
\draw [very thick,fill = white] (5,5) circle [radius=.25];
\draw [very thick,fill = white] (15,0) circle [radius=.25];
\draw [very thick,fill = white] (15,5) circle [radius=.25];
\draw [very thick,fill = white] (20,0) circle [radius=.25];
\draw [very thick,fill = white] (20,5) circle [radius=.25];
\draw [very thick,fill = white] (25,0) circle [radius=.25];
\draw [very thick,fill = white] (25,5) circle [radius=.25];
\draw [very thick,fill = white] (30,0) circle [radius=.25];
\draw [very thick,fill = white] (30,5) circle [radius=.25];

\node at (1,6) {\small{$2$}};
\node at (6,6) {\small{$4$}};
\node at (11,6) {\small{$6$}};
\node at (16,6) {\small{$8$}};
\node at (21,6) {\small{$10$}};
\node at (26,6) {\small{$12$}};
\node at (31,6) {\small{$14$}};

\node at (1,1) {\small{$9$}};
\node at (6,1) {\small{$11$}};
\node at (11,1) {\small{$13$}};
\node at (16,1) {\small{$1$}};
\node at (21,1) {\small{$3$}};
\node at (26,1) {\small{$5$}};
\node at (31,1) {\small{$7$}};

\end{tikzpicture}

\vspace{-.15in}

\end{center}
\caption{A $4$-dispersed labelling of $L_{2,7}$ }
\label{fig2,7}
\end{figure}

We note that most of the labelling problems discussed in \cite{Gal} do not involve distances between the vertices in a graph. One exception is the problem of \emph{radio labellings} \cite[\S 7.4]{Gal}.

Let $\mathsf{DL}(G)$ denote the maximum value of $k$ such that $G$ has a $k$-dispersed labelling and let $\mathsf{DL}^{\circ}(G)$ 
denote the maximum value of $k$ such that $G$ has a $k$-circular-dispersed labelling.
If $G$ is a finite, connected graph, then $\mathsf{DL}(G)$ and $\mathsf{DL}^{\circ}(G)$ are both well-defined positive integers. If $G$ is not connected, then it is possible that $\mathsf{DL}^{\circ}(G) = \infty$.

For the rest of the paper, $G$ always refers to a finite, connected graph. Here are two easy preliminary lemmas that we state without proof.

\begin{Lemma}
If $H$ is a spanning subgraph of $G$ (obtained by removing edges  but no vertices), then $\DL(H) \ge \DL(G)$ and  $\DL^\circ(H) \ge \DL^\circ(G)$.
\end{Lemma}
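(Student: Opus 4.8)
The plan is to reduce both inequalities to a single observation: deleting edges cannot decrease distances. Write $V$ for the common vertex set of $G$ and $H$, and let $d_G$ and $d_H$ denote distance in $G$ and in $H$ respectively. The first step is to note that since $E(H) \subseteq E(G)$, every walk in $H$ is also a walk in $G$; hence a shortest $u$--$v$ walk in $H$ is, in particular, a $u$--$v$ walk in $G$ of the same length, so $d_G(u,v) \le d_H(u,v)$ for every pair $u,v \in V$. (Here one should adopt the convention $d_H(u,v) = \infty$ when no such walk exists, to cover the case that $H$ happens to be disconnected.)

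Given this monotonicity, the bound on $\DL$ is immediate. Let $k = \DL(G)$ and let $\phi : \{1,\dots,n\} \to V$ be a $k$-dispersed labelling of $G$, which exists by the definition of $\DL(G)$. For each $i$ with $1 \le i \le n-1$ we then have $d_H(\phi(i),\phi(i+1)) \ge d_G(\phi(i),\phi(i+1)) \ge k$, so the very same bijection $\phi$ is a $k$-dispersed labelling of $H$. Therefore $\DL(H) \ge k = \DL(G)$. The circular case is handled by the identical argument applied to a $\DL^\circ(G)$-circular-dispersed labelling $\phi : \zed_n \to V$: the inequality $d_H(\phi(i),\phi(i+1)) \ge d_G(\phi(i),\phi(i+1))$ now holds for every $i \in \zed_n$, including the wrap-around pair $i = n-1$, so $\phi$ witnesses $\DL^\circ(H) \ge \DL^\circ(G)$.

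There is essentially no obstacle here; the one point that warrants a moment's care is the possibility that $H$ is disconnected, in which case $\DL^\circ(H) = \infty$ and one should either invoke the $\infty$ convention above or simply remark that the claimed inequalities hold vacuously. Accordingly, I would present the distance-monotonicity fact as the crux of the argument and leave the rest as a one-line deduction.
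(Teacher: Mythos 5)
Your argument is correct: distance monotonicity under edge deletion ($d_H \ge d_G$) immediately shows that any $k$-dispersed (or $k$-circular-dispersed) labelling of $G$ is also one of $H$. The paper states this lemma without proof, and yours is exactly the intended one-line justification, with the remark about disconnected $H$ being a sensible extra precaution.
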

 
\begin{Lemma}
\label{circ.lem}
$\DL^\circ(G) \le \DL(G)$.
\end{Lemma}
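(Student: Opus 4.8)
The plan is to observe that every circular-dispersed labelling is, in particular, an ordinary dispersed labelling obtained by forgetting one constraint. Concretely, suppose $\phi : \{1,\dots,n\} \to V$ is a $k$-circular-dispersed labelling of $G$, so that $d(\phi(i),\phi(i+1)) \ge k$ for $1 \le i \le n-1$ and additionally $d(\phi(n),\phi(1)) \ge k$. Dropping the last inequality, $\phi$ still satisfies $d(\phi(i),\phi(i+1)) \ge k$ for $1 \le i \le n-1$, which is exactly the defining property of a $k$-dispersed labelling. Hence the existence of a $k$-circular-dispersed labelling of $G$ implies the existence of a $k$-dispersed labelling of $G$.

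Now I would apply this with $k = \DL^\circ(G)$. By definition of $\DL^\circ$, the graph $G$ admits a $\DL^\circ(G)$-circular-dispersed labelling; by the observation above, $G$ therefore admits a $\DL^\circ(G)$-dispersed labelling. Since $\DL(G)$ is the maximum $k$ for which a $k$-dispersed labelling exists, we conclude $\DL(G) \ge \DL^\circ(G)$, which is the claim. (Both quantities are well-defined positive integers here because $G$ is finite and connected, as stipulated.)

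There is essentially no obstacle: the argument is a one-line containment of feasible labellings, and the only thing to be careful about is that the index conditions match up after deleting the wrap-around constraint, which they do. If desired, one could phrase it even more tersely by noting that the set of $k$ for which $G$ has a $k$-circular-dispersed labelling is a subset of the set of $k$ for which $G$ has a $k$-dispersed labelling, so the maximum of the former is at most the maximum of the latter.
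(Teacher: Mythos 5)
Your proof is correct: the paper states this lemma without proof as an easy preliminary fact, and your argument (a $k$-circular-dispersed labelling is by definition a $k$-dispersed labelling satisfying one extra constraint, so the feasible set of $k$ for the circular version is contained in that for the ordinary version) is exactly the intended one-line justification.
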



Our first real result relates the values $\mathsf{DL}(G)$ to the distance $k$ graph of $G$. 
The \emph{distance $k$ graph} of $G$, denoted $G_k$, is the graph in which two vertices $x$ and $y$ are joined by an edge if $d(x,y) = k$. 
Clearly $G_1 = G$. Denote $G^*_{k-1} = G_1 \cup \dots\cup G_{k-1}$ and let $H_k(G) = (G^*_{k-1})^c$, the complement of graph $G^*_{k-1}$. So two vertices of $H_k(G)$ are adjacent if the distance between them (in $G$) is at least $k$.

\begin{Theorem}
\label{Thm1}
For a graph $G$, $\mathsf{DL}(G) \geq k$ if and only if $H_k(G)$ contains a hamiltonian path. Further, $\mathsf{DL}^{\circ}(G) \geq k$ if and only if $H_k(G)$ contains a hamiltonian cycle.
\end{Theorem}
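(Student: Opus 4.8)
The plan is to prove this essentially by unwinding the definitions, since the statement is little more than a translation of the dispersed-labelling condition into the language of hamiltonicity. The key observation is that, by construction, two vertices $x,y$ of $G$ are adjacent in $H_k(G)$ if and only if $d(x,y) \geq k$. Thus the defining inequality $d(\phi(i),\phi(i+1)) \geq k$ of a $k$-dispersed labelling is nothing other than the assertion that $\phi(i)$ and $\phi(i+1)$ are joined by an edge of $H_k(G)$.

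For the forward implication of the first statement, I would start from a $k$-dispersed labelling, viewed as a bijection $\phi : \{1,\dots,n\} \to V$ with $d(\phi(i),\phi(i+1)) \geq k$ for $1 \leq i \leq n-1$. By the observation above, each consecutive pair $\phi(i)\phi(i+1)$ is an edge of $H_k(G)$, so $\phi(1),\phi(2),\dots,\phi(n)$ is a walk in $H_k(G)$; since $\phi$ is a bijection this walk visits every vertex exactly once, hence is a hamiltonian path. Conversely, given a hamiltonian path $v_1 v_2 \cdots v_n$ in $H_k(G)$, I would define $\phi(i) = v_i$; this is a bijection onto $V$ because the path is hamiltonian, and $v_i v_{i+1} \in E(H_k(G))$ gives $d(\phi(i),\phi(i+1)) \geq k$, so $\phi$ is a $k$-dispersed labelling. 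Therefore $\mathsf{DL}(G) \geq k$ exactly when such a labelling exists, i.e.\ exactly when $H_k(G)$ has a hamiltonian path.

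The second statement is handled in the same way, with the single extra edge condition. A $k$-circular-dispersed labelling additionally satisfies $d(\phi(n),\phi(1)) \geq k$, which says precisely that $\phi(n)\phi(1)$ is also an edge of $H_k(G)$; combined with the hamiltonian path $\phi(1),\dots,\phi(n)$ this closes up to a hamiltonian cycle. Conversely any hamiltonian cycle of $H_k(G)$, read off starting from an arbitrary vertex, yields a bijection $\phi$ with all $n$ required distances (including the wrap-around one) at least $k$. Hence $\mathsf{DL}^{\circ}(G) \geq k$ if and only if $H_k(G)$ contains a hamiltonian cycle.

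There is no real obstacle here; the only points to state carefully are that the correspondence between labellings and paths/cycles is a genuine bijection (which is where one uses that $\phi$ is a bijection and that a hamiltonian path/cycle visits each vertex once), and — for completeness — to note that when $H_k(G)$ is disconnected or has an isolated vertex it has no hamiltonian path, consistent with $\mathsf{DL}(G) < k$ in that case. I would also remark that this result is the basis for all later exact computations, since it reduces computing $\mathsf{DL}(G)$ to a sequence of hamiltonicity questions on the graphs $H_k(G)$.
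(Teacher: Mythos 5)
Your proof is correct and follows essentially the same approach as the paper's: translate the condition $d(\phi(i),\phi(i+1))\geq k$ into adjacency in $H_k(G)$, so that a $k$-dispersed labelling is exactly a hamiltonian path (and the circular variant a hamiltonian cycle). The paper states only the forward direction explicitly and declares the converse and circular case ``similar,'' so your write-up is just a more complete version of the same argument.
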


\begin{proof}
Suppose that $G$ is a graph on $n$ vertices. Suppose $G$ has a $k$-dispersed labelling, say $\alpha$,  and 
define $P = (1, \dots , n)$.
For any $i$, $1 \leq i \leq n-1$, it holds that $d(i,i+1) \geq k$ (because $\alpha$ is a $k$-dispersed labelling) and hence  $\{i,i+1\}$ is an edge of $H_k(G)$. It follows that $P$ is a hamiltonian path in $H_k(G)$. 

The proof of the converse result is similar, as is the proof of the corresponding result for $\mathsf{DL}^{\circ}(G)$.
\end{proof}

\begin{Corollary}
Computing $\mathsf{DL}(G)$ is $\mathsf{NP}$-hard.
\end{Corollary}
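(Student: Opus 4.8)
The plan is to give a polynomial-time reduction from the Hamiltonian path problem, using Theorem~\ref{Thm1}. First observe that a polynomial-time procedure for computing $\mathsf{DL}(G)$ would in particular decide, in polynomial time, whether $\mathsf{DL}(G) \ge 2$; so it suffices to prove that this decision problem is $\mathsf{NP}$-hard. Since $G$ is connected, $G^*_1 = G_1 = G$, and therefore $H_2(G) = G^c$, the complement of $G$. Hence, by Theorem~\ref{Thm1}, $\mathsf{DL}(G) \ge 2$ if and only if $G^c$ has a Hamiltonian path.

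The natural reduction is thus to take an instance $H$ of the Hamiltonian path problem, output $G = H^c$, and ask whether $\mathsf{DL}(G) \ge 2$; this is correct because $G^c = H$. The only point requiring care is that $\mathsf{DL}(G)$ is well-defined (a finite positive integer) only when $G$ is connected, so the reduction must output a connected graph. To ensure this, I would start from the Hamiltonian path problem restricted to graphs of maximum degree $3$, which is well known to remain $\mathsf{NP}$-complete. Given such an $H$ on $n$ vertices, we may assume $n \ge 8$ (smaller instances are decided in constant time); then every vertex of $H^c$ has degree at least $n - 4 \ge n/2$, which forces $H^c$ to be connected, since the smaller part of any disconnection would contain at most $n/2$ vertices and hence a vertex of degree strictly less than $n/2$.

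Putting the pieces together, the map $H \mapsto G = H^c$ is computable in polynomial time, always produces a connected graph, and satisfies $\mathsf{DL}(G) \ge 2$ if and only if $H$ has a Hamiltonian path. Consequently a polynomial-time algorithm for $\mathsf{DL}$ would decide Hamiltonian path on bounded-degree graphs in polynomial time, so computing $\mathsf{DL}(G)$ is $\mathsf{NP}$-hard. I expect the only subtle point to be precisely this bookkeeping about connectedness of the output graph; restricting to bounded-degree instances is the cleanest fix, though one could instead verify directly that Hamiltonian path remains $\mathsf{NP}$-complete when the input graph is promised to have a connected complement.
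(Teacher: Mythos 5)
Your reduction is exactly the paper's: complement the Hamiltonian-path instance and test whether $\mathsf{DL}\ge 2$, using the observation that $H_2(G^c)=G$ together with Theorem~\ref{Thm1}. The only difference is your added bookkeeping to guarantee that the complement is connected (by reducing from bounded-degree Hamiltonian path), a legitimate point that the paper's proof glosses over; the argument is correct.
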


\begin{proof}
Suppose that $\mathcal{O}$ is an oracle that computes $\mathsf{DL}(G)$ in polynomial time. We can use $\mathcal{O}$ to solve the $\mathsf{NP}$-complete  {\sc hamiltonian path} problem as follows. Given a graph $G$, run $\mathcal{O}$ on $G^c$. Observe that $H_2(G^c) = G$. 
So $\mathcal{O}(G^c) \geq 2$ if and only if 
$G$ has a hamiltonian path.
\end{proof}

The rest of the paper is organized as follows.
In Section \ref{sec2}, we determine the exact value of $\mathsf{DL}(G)$ for cycles, paths, grids, hypercubes and complete binary trees. For these classes of graphs, $\mathsf{DL}(G)=r(G)$ or $r(G) - 1$, where $r(G)$ is the radius of the graph $G$.
In Section \ref{sec3}, we give a product construction and, using this, we show that  $\DL^\circ( G\, \Box \, H) \ge \DL^\circ(G)+\DL^\circ(H)$, provided that the numbers of vertices in $G$ and $H$ is relatively prime.\footnote{The \emph{Cartesian product}, $G \, \Box \, H$, of graphs $G$ and $H$ is 
defined in Section \ref{sec3}.} In Section \ref{sec4}, we prove a degree-based lower bound on $\mathsf{DL}(G)$. Finally, in Section \ref{sec5}, we list some additional questions.

\section{Computing DL$(G)$ for some classes of graphs}
\label{sec2}

We first consider cycles, for which the values $\mathsf{DL}(G)$ can easily be determined. 

\begin{Theorem}
\label{cycle.thm}
Let $C_n$ denote a cycle of length $n$. Then  $\mathsf{DL}(C_n) = (n-1)/2$ if $n$ is odd, and 
$\mathsf{DL}(C_n) = (n-2)/2$ if $n$ is even.
\end{Theorem}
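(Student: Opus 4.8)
Identify the vertex set of $C_n$ with $\zed_n$, where $i \sim j$ iff $i - j \equiv \pm 1 \pmod n$, so that $d(i,j) = \min(|i-j|,\, n-|i-j|)$ and the diameter of $C_n$ equals $\lfloor n/2 \rfloor$. The plan is to establish matching upper and lower bounds on $\DL(C_n)$ in each parity.

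\emph{Upper bounds.} When $n$ is odd, no two vertices are at distance exceeding $(n-1)/2 = \lfloor n/2 \rfloor$, so trivially $\DL(C_n) \le (n-1)/2$. When $n = 2m$ is even, suppose for contradiction that $\phi$ is an $m$-dispersed labelling. Since the diameter is $m$, each pair of consecutively labelled vertices must be at distance exactly $m$, i.e.\ antipodal, so $\phi(i+1) \equiv \phi(i) + m \pmod{2m}$ for every $i$; but then $\phi(i+2) \equiv \phi(i) + 2m \equiv \phi(i)$, contradicting injectivity (here $n \ge 4$, so the labels $1,2,3$ all exist). Hence $\DL(C_n) \le m-1 = (n-2)/2$. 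Equivalently, $H_m(C_n)$ is a perfect matching on $n \ge 4$ vertices, which is disconnected and so has no hamiltonian path, and one may instead invoke Theorem~\ref{Thm1}.

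\emph{Lower bounds.} For $n = 2m+1$ odd, I would use the constant-jump labelling $\phi(i) \equiv (i-1)\,m \pmod n$. Since $\gcd(m, 2m+1) = 1$, this is a bijection from $\{1,\dots,n\}$ onto $\zed_n$, and consecutively labelled vertices differ by $m$, hence are at distance $\min(m, m+1) = m = (n-1)/2$; thus $\DL(C_n) \ge (n-1)/2$. For $n = 2m$ even, I would interleave the two halves of the cycle: list the vertices in the order $0, m, 1, m+1, 2, m+2, \dots, m-1, 2m-1$, that is, $\phi(2j+1) = j$ and $\phi(2j+2) = m+j$ for $0 \le j \le m-1$. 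Every element of $\zed_n$ occurs exactly once, and two consecutively labelled vertices differ by either $m$ or $m-1$, hence are at distance $m$ or $m-1$---in both cases at least $m-1 = (n-2)/2$. Therefore $\DL(C_n) \ge (n-2)/2$. Combining with the upper bounds yields the claimed values.

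Every step here is elementary, so I do not anticipate a serious obstacle. The only place needing slight care is the even case, where the diameter bound alone is not tight: one must separately rule out the value $m$ (via the non-injectivity argument above, or equivalently via Theorem~\ref{Thm1} and the observation that the distance-$m$ graph of $C_{2m}$ is a disconnected perfect matching) and exhibit an explicit labelling attaining $m-1$.
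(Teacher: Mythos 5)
Your proof is correct, and while it rests on the same two pillars as the paper's (a diameter argument for the upper bounds, an explicit ordering for the lower bounds), the execution of the lower bounds is genuinely different. The paper works through Theorem~\ref{Thm1}: for odd $n$ it identifies $H_{(n-1)/2}(C_n)$ as a single hamiltonian cycle, and for even $n$ it analyzes the cubic graph $H_{n/2-1}(C_n)$, splitting into the cases $n \equiv 0$ and $n \equiv 2 \pmod 4$ (the latter being a prism whose hamiltonian path is left as an exercise). You instead write down closed-form labellings: the constant-jump map $\phi(i) \equiv (i-1)m \pmod{n}$ for odd $n=2m+1$, and the interleaving $0, m, 1, m+1, \dots, m-1, 2m-1$ for even $n=2m$. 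Both formulas check out (bijectivity from $\gcd(m,2m+1)=1$ in the odd case and from the disjoint ranges $\{0,\dots,m-1\}$, $\{m,\dots,2m-1\}$ in the even case; consecutive distances are exactly $m$ in the odd case and alternate between $m$ and $m-1$ in the even case). Your even-case construction is uniform in $n$ and avoids both the mod-4 case split and the unproved prism claim, so it is arguably cleaner and more self-contained. Your upper bound for even $n$ is the paper's perfect-matching observation recast as a direct injectivity contradiction (consecutive labels would have to be antipodal, forcing $\phi(i+2)=\phi(i)$); you correctly note the two arguments are equivalent.
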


\begin{proof}
The maximum distance between two vertices of $C_n$ is $n/2$ if $n$ is even and $(n-1)/2$ if $n$ is odd. 

First, suppose that $n$ is odd and let $k = (n-1)/2.$ The graph $H_k=H_k(C_n)$ is a single (hamiltonian) cycle of length $n$, so $\mathsf{DL}(C_n) \geq k$ follows from Theorem \ref{Thm1}.
Also, $H_{k+1}(C_n)$ is the empty graph, so $\mathsf{DL}(C_n) \leq k$.

Next, suppose that $n$ is even and let $k = n/2.$ $H_k$ consists of $k$ disjoint edges, so $H_k$ is not hamiltonian and therefore $\mathsf{DL}(C_n) \leq k-1$. We now study the structure of the  graph $H_{k-1}=H_{k-1}(C_n)$, which is a cubic graph. We consider two subcases.

First, suppose $n \equiv 0 \bmod 4$. Here, the edges in $H_{k-1}$ that are not in $H_k$ form a hamiltonian cycle, so we are done. 
If $n \equiv 2 \bmod 4$, then $H_{k-1}$  is a prism; the edges in $H_{k-1}$ that are not in $H_k$ form  two disjoint cycles of length $n/2$.
It is an easy exercise to verify that the prism contains a hamiltonian path.
Thus $\mathsf{DL}(C_n) \geq k-1$ when $n$ is even, and the proof is complete.
\end{proof}

The \emph{eccentricity} of a vertex $v$ is the quantity
$\epsilon(v) = \max \{ d ( v,u ) : u \in V \}$.
  The \emph{radius} of a graph $G$, denoted $r(G)$, is the minimum eccentricity of any vertex, i.e., 
$r(G) = \min \{ \epsilon(v) : v \in V\}$.

We will require some additional related definitions for later use.
A vertex $v \in V$ is \emph{uniquely eccentric} if there is a unique vertex $u$ such that $d(u,v) = \epsilon(v)$.
A vertex $v \in V$ is a \emph{central vertex} if $\epsilon (v) = r(G)$.

\begin{Theorem}
\label{Thm4}
For a graph $G$, $\mathsf{DL}(G) \leq r(G)$.
\end{Theorem}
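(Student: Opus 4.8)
The plan is to use Theorem~\ref{Thm1}, which says $\mathsf{DL}(G) \geq k$ if and only if $H_k(G)$ contains a hamiltonian path. To prove $\mathsf{DL}(G) \leq r(G)$, it suffices to show that $H_{r(G)+1}(G)$ has no hamiltonian path — in fact, it is enough to show this graph is disconnected (or even has an isolated vertex), since a graph with a hamiltonian path is connected. Set $r = r(G)$ and let $v$ be a central vertex, so $\epsilon(v) = r$.

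First I would observe what the central vertex tells us about $H_{r+1}(G)$. By definition of eccentricity, $d(v,u) \leq r$ for every vertex $u \in V$. Hence there is no vertex $u$ with $d(v,u) \geq r+1$, which means $v$ is an isolated vertex in $H_{r+1}(G)$. Since $n \geq 2$ (for a connected graph with at least two vertices; the trivial case $n=1$ has $\mathsf{DL}$ undefined or vacuous and $r(G)=0$ anyway), the graph $H_{r+1}(G)$ on $n$ vertices has an isolated vertex and therefore cannot contain a hamiltonian path. By Theorem~\ref{Thm1}, this gives $\mathsf{DL}(G) \not\geq r+1$, i.e., $\mathsf{DL}(G) \leq r$.

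There is essentially no main obstacle here: the argument is a one-line consequence of the definitions of radius and eccentricity combined with Theorem~\ref{Thm1}. The only point requiring minor care is the degenerate case $n = 1$, where $G$ is a single vertex, $r(G) = 0$, and there are no pairs $(i,i+1)$ to constrain, so the condition defining $\mathsf{DL}(G)$ is vacuous; one can either exclude this case explicitly or note that the statement holds trivially by convention. For all $n \geq 2$ the isolated-vertex argument applies verbatim.
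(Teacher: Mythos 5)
Your proof is correct. It rests on exactly the same observation as the paper's proof --- a central vertex $v$ satisfies $d(v,u)\le r(G)$ for every $u$, so it cannot be far from both of its neighbours in the labelling order --- but you package it differently: you invoke Theorem~\ref{Thm1} and note that $v$ is isolated in $H_{r(G)+1}(G)$, so that graph has no hamiltonian path, whereas the paper argues directly on the labelling (the vertex labelled $i+1$, or $i-1$ if $v$ carries the last label, is within distance $\epsilon(v)=r(G)$ of $v$, forcing $k\le r(G)$). The two routes are logically interchangeable; yours is slightly more modular since it reuses the hamiltonian-path characterization already established, and you only need the direction of Theorem~\ref{Thm1} that the paper proves in full. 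Your aside about the degenerate case $n=1$ is harmless --- the paper implicitly assumes $n\ge 2$ as well, since its own proof needs a vertex labelled $i+1$ or $i-1$.
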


\begin{proof}
Suppose that $G$ has a $k$-dispersed labelling. Let $u$ be a vertex such that $\epsilon(u) = r(G)$. If $u$ has label $i < n$, then let $v$ be the vertex that is labelled $i+1$; if $u$ is labelled with $n$, then let $v$ be the vertex that is labelled $i-1$. We must have $d(u,v) \geq k$ since the labelling is $k$-dispersed. However, $d(u,v) \leq \epsilon(u) = r(G)$. Taking $k=\mathsf{DL}(G)$, 
it follows that $\mathsf{DL}(G) \leq r(G)$.
\end{proof}

We computed $\mathsf{DL}(C_n)$ for all cycles $C_n$ in Theorem \ref{cycle.thm}. It is easy to verify that 
$r(C_n) = n/2$ if $n$ is even and $r(C_n) = (n-1)/2$ if $n$ is odd. Hence, 
$\mathsf{DL}(C_n) = r(C_n)$ if $n$ is odd and $\mathsf{DL}(C_n) = r(C_n) - 1$ if $n$ is even.

\medskip


\subsection{Paths}

A simple class of graphs to consider are the paths. Let $P_m$ denote the path having $m$ edges and $m+1$ vertices.
It is easy to check that the radius of a path is given by the following formula:
\[r(P_m) = \begin{cases}
\frac{m}{2} & \text{if $m$ is even}\\
\frac{m+1}{2} & \text{if $m$ is odd.}
\end{cases}
\]

\begin{Theorem}
\label{paths.thm}
$\mathsf{DL}(P_m) = r(P_m)$ for any path $P_m$.
\end{Theorem}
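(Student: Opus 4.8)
The plan is to prove both inequalities. The upper bound $\DL(P_m) \le r(P_m)$ is already given by Theorem \ref{Thm4}, so the entire work is in establishing $\DL(P_m) \ge r(P_m)$, i.e., exhibiting an explicit $r(P_m)$-dispersed labelling of the vertices of $P_m$. I would label the vertices of $P_m$ as $0, 1, \dots, m$ according to their position along the path, so that $d(a,b) = |a-b|$, and then construct a bijection $\phi : \{1,\dots,m+1\} \to \{0,1,\dots,m\}$ with the property that consecutive labels land on vertices at distance at least $r(P_m)$.

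The natural construction is the ``interleaving'' permutation: list the vertices in the order $0, \lceil (m+1)/2 \rceil, 1, \lceil (m+1)/2 \rceil + 1, 2, \dots$, alternating between the ``left half'' $\{0, 1, \dots\}$ and the ``right half'' $\{\lceil (m+1)/2\rceil, \dots, m\}$. Concretely, with $n = m+1$ vertices and $h = \lceil n/2 \rceil$, define the order to visit $0, h, 1, h+1, 2, h+2, \dots$; this is exactly the sequence that shows up when one ``unshuffles'' a sorted list. Any two consecutive entries differ either by $h$ (when going from a left-vertex to the right-vertex) or by $h-1$ (when going from a right-vertex back to the next left-vertex, e.g. from $h$ to $1$). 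So the minimum gap achieved is $h-1 = \lceil n/2 \rceil - 1 = \lceil (m+1)/2 \rceil - 1$. I would then check the two parities: when $m$ is odd, $n = m+1$ is even, $h = (m+1)/2$, and the minimum gap is $(m+1)/2 - 1 = (m-1)/2$, which unfortunately is one less than $r(P_m) = (m+1)/2$ — so the plain interleaving is not quite good enough and the construction must be adjusted for the odd case. When $m$ is even, $n$ is odd, $h = (m+2)/2 = m/2 + 1$, and the minimum gap is $m/2 = r(P_m)$, which is exactly right.

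So the real content, and the main obstacle, is the case $m$ odd, where we need every consecutive pair to be at distance exactly $(m+1)/2$ — note $r(P_m) = (m+1)/2$ is the full diameter-minus-something but in fact for $m$ odd the pairs of vertices at distance $\ge (m+1)/2$ are somewhat limited, so $H_k(P_m)$ with $k = (m+1)/2$ is a sparse graph and I should analyze it directly. For $m$ odd, write $m = 2t-1$, so $n = 2t$ vertices $0,\dots,2t-1$ and we need consecutive labels at distance $\ge t$. Vertex $j$ on the left (i.e. $j \le t-1$) is at distance $\ge t$ precisely from vertices $j+t, j+t+1, \dots, 2t-1$; vertex $j$ with $j \ge t$ is joined to $0,1,\dots,j-t$. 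This graph $H_t(P_{2t-1})$ is isomorphic to a ``half graph''/bipartite-type structure on the two halves $L = \{0,\dots,t-1\}$ and $R=\{t,\dots,2t-1\}$, and I would find a Hamiltonian path in it explicitly — for instance something like $0, 2t-1, 1, ?, \dots$ does not immediately work since after $1$ the only neighbors are $t+1, \dots, 2t-1$, so we can take $1, 2t-2$, then $2, 2t-3$, etc., giving the zigzag $0, 2t-1, 1, 2t-2, 2, 2t-3, \dots, t-1, t$. Each step $0 \to 2t-1$ has gap $2t-1 \ge t$; step $2t-1 \to 1$ has gap $2t-2 \ge t$ (true since $t \ge 2$); step $j \to (2t-1-j)$ has gap $2t-1-2j$, and the worst is the last, $t-1 \to t$, with gap $1$ — which fails. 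So even the zigzag needs care at the ends.

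Given these false starts, the cleanest route for the odd case is to invoke Theorem \ref{Thm1} and argue about $H_t(P_{2t-1})$ structurally: show it has a Hamiltonian path by a short direct argument (e.g. exhibit one such path by a slightly cleverer ordering than the naive zigzag, perhaps $t-1, 2t-1, t-2, 2t-2, \dots, 0, t$ read appropriately, or split into two interleaved runs and join them), while for $m$ even the naive interleaving labelling already works and can be written down in one line. I expect the even case to be a two-sentence verification and the odd case to require the bulk of the care; the main obstacle is handling the endpoints of the Hamiltonian path in $H_t(P_{2t-1})$, since the ``middle'' vertices $t-1$ and $t$ are the hardest to place (they have the fewest high-distance partners), so the path should be arranged to use them as its two endpoints rather than internal vertices.
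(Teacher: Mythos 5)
Your approach is the same as the paper's: the upper bound comes from Theorem \ref{Thm4}, and the lower bound is an explicit interleaved labelling; your even-$m$ construction is the paper's labelling $2\ 4\ \cdots\ m\ 1\ 3\ \cdots\ m+1$ up to reflection, and your two-sentence verification of it is right. The only loose end is that you leave the odd case as a ``perhaps,'' but your final candidate $t-1,\ 2t-1,\ t-2,\ 2t-2,\ \dots,\ 0,\ t$ (with $m=2t-1$) does work and needs no further structural analysis of $H_t(P_{2t-1})$: each consecutive pair is either $(t-1-j,\ 2t-1-j)$ with gap exactly $t$ or $(2t-1-j,\ t-2-j)$ with gap $t+1$, so every gap is at least $t=(m+1)/2=r(P_m)$. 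This is precisely the paper's odd-case labelling $2\ 4\ \cdots\ m+1\ 1\ 3\ \cdots\ m$ read in reverse. The ``main obstacle'' you identify also dissolves more simply: the defect in the naive interleaving $0, h, 1, h+1, \dots$ is only that it starts in the smaller-indexed half, making the return steps have gap $h-1$; starting in the other half, i.e.\ $h, 0, h+1, 1, \dots$, turns the alternating gaps from $\{h, h-1\}$ into $\{h, h+1\}$, and $h=(m+1)/2=r(P_m)$ when $m$ is odd. So the proposal is correct in outline and the even case is complete, but the odd case as written stops one (easy) verification short of a proof.
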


\begin{proof} The vertices of the path $P_m$ will be labelled with the integers $1, \dots , m+1$.
First, suppose $m$ is even. An $\frac{m}{2}$-dispersed labelling of $P_m$ is as follows:
\[ 2 \:\:\: 4 \:\:\: \cdots \:\:\: m \:\:\: 1 \:\:\: 3 \:\:\: \cdots \:\:\: m+1.\]
For odd $m$, an $\frac{m+1}{2}$-dispersed labelling is as follows:
\[ 2 \:\:\: 4 \:\:\: \cdots \:\:\: m+1 \:\:\: 1 \:\:\: 3 \:\:\: \cdots \:\:\: m.\]

\end{proof}

\subsection{Grids}

As another, more complicated class of graphs, we consider the $m \times n$ grid graphs (or lattice graphs), which we denote by $L_{m,n}$. The graph $L_{4,6}$ is depicted in Figure \ref{fig1}.
The following lemma will be useful.

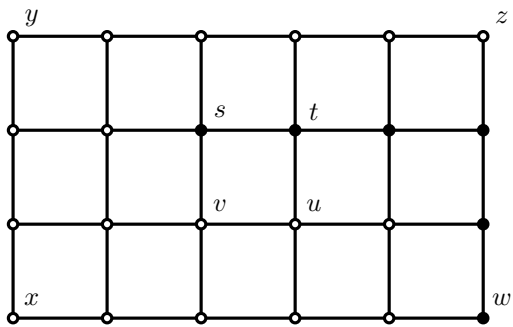
\begin{figure}
\begin{center}


\begin{tikzpicture}[scale=0.25]

\draw [very thick] (0,0) -- (5,0) -- (10,0) -- (15,0) -- (20,0) -- (25,0);
\draw [very thick] (0,5) -- (5,5) -- (10,5) -- (15,5) -- (20,5) -- (25,5);
\draw [very thick] (0,10) -- (5,10) -- (10,10) -- (15,10) -- (20,10) -- (25,10);
\draw [very thick] (0,15) -- (5,15) -- (10,15) -- (15,15) -- (20,15) -- (25,15);

\draw [very thick] (0,0) -- (0,5) -- (0,10) -- (0,15);
\draw [very thick] (5,0) -- (5,5) -- (5,10) -- (5,15);
\draw [very thick] (10,0) -- (10,5) -- (10,10) -- (10,15);
\draw [very thick] (15,0) -- (15,5) -- (15,10) -- (15,15);
\draw [very thick] (20,0) -- (20,5) -- (20,10) -- (20,15);
\draw [very thick] (25,0) -- (25,5) -- (25,10) -- (25,15);

\draw [very thick,fill = white] (0,0) circle [radius=.25];
\draw [very thick,fill = white] (0,5) circle [radius=.25];
\draw [very thick,fill = white] (0,15) circle [radius=.25];
\draw [very thick,fill = white] (0,10) circle [radius=.25];
\draw [very thick,fill = white] (10,0) circle [radius=.25];
\draw [very thick,fill = black] (10,10) circle [radius=.25];
\draw [very thick,fill = white] (10,5) circle [radius=.25];
\draw [very thick,fill = white] (10,15) circle [radius=.25];
\draw [very thick,fill = white] (5,0) circle [radius=.25];
\draw [very thick,fill = white] (5,5) circle [radius=.25];
\draw [very thick,fill = white] (5,15) circle [radius=.25];
\draw [very thick,fill = white] (5,10) circle [radius=.25];
\draw [very thick,fill = white] (15,0) circle [radius=.25];
\draw [very thick,fill = black] (15,10) circle [radius=.25];
\draw [very thick,fill = white] (15,5) circle [radius=.25];
\draw [very thick,fill = white] (15,15) circle [radius=.25];
\draw [very thick,fill = white] (20,0) circle [radius=.25];
\draw [very thick,fill = white] (20,5) circle [radius=.25];
\draw [very thick,fill = white] (20,15) circle [radius=.25];
\draw [very thick,fill = black] (20,10) circle [radius=.25];
\draw [very thick,fill = black] (25,0) circle [radius=.25];
\draw [very thick,fill = black] (25,10) circle [radius=.25];
\draw [very thick,fill = black] (25,5) circle [radius=.25];
\draw [very thick,fill = white] (25,15) circle [radius=.25];

\node at (1,1) {\small{$x$}};
\node at (26,1) {\small{$w$}};

\node at (11,6) {\small{$v$}};
\node at (16,6) {\small{$u$}};

\node at (11,11) {\small{$s$}};
\node at (16,11) {\small{$t$}};

\node at (1,16) {\small{$y$}};
\node at (26,16) {\small{$z$}};

\end{tikzpicture}

\vspace{-.15in}

\end{center}
\caption{The graph $L_{4,6}$}
\label{fig1}
\end{figure}

\begin{Lemma}
\label{three.lem}
\label{uc.lem} Suppose a graph $G$ contains three central vertices, each of which is uniquely eccentric. Then
$\mathsf{DL}(G) \leq r(G)-1$.
\end{Lemma}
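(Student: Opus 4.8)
## Proof Plan for Lemma~\ref{three.lem}

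The plan is to argue by contradiction, using the same idea that proved Theorem~\ref{Thm4} but pushed further by a counting/parity argument on the endpoints of the hamiltonian path. Suppose $G$ has a $k$-dispersed labelling $\alpha$ with $k = r(G)$, and let $P = (1, 2, \dots, n)$ be the corresponding hamiltonian path in $H_k(G)$ guaranteed by Theorem~\ref{Thm1}. Let $a, b, c$ be the three central vertices, each uniquely eccentric, and let $a', b', c'$ be their respective (unique) eccentric vertices, so that $d(a,a') = d(b,b') = d(c,c') = r(G) = k$, and these are the \emph{only} vertices at distance $\geq k$ from $a$, $b$, $c$ respectively. In other words, in $H_k(G)$ each of $a, b, c$ has degree exactly $1$, with sole neighbour $a', b', c'$ respectively.

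The key observation is that a vertex of degree $1$ in a graph must be an endpoint of any hamiltonian path through that graph (since an internal vertex of a path has degree $\geq 2$ in any subgraph containing the path). A path has only two endpoints, so $H_k(G)$ can contain at most two vertices of degree $1$ if it is to have a hamiltonian path. Since $a$, $b$, $c$ are three distinct vertices of degree $1$ in $H_k(G)$, no hamiltonian path exists, contradicting $\mathsf{DL}(G) \geq k$. Hence $\mathsf{DL}(G) \leq r(G) - 1$.

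I should double-check the one subtle point: that $a$, $b$, $c$ really do have degree exactly $1$ in $H_k(G)$ where $k = r(G)$. A vertex $v$ has $\deg_{H_k(G)}(v) = |\{u : d(u,v) \geq k\}|$. For a central vertex $v$ we have $\epsilon(v) = r(G) = k$, so the set $\{u : d(u,v) \geq k\}$ equals $\{u : d(u,v) = \epsilon(v)\}$, which has size exactly $1$ precisely when $v$ is uniquely eccentric. So each of $a, b, c$ contributes a degree-$1$ vertex, and they are distinct by hypothesis. (One should also note $a, b, c$ are genuinely three \emph{different} vertices; this is part of the hypothesis ``contains three central vertices.'')

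There is essentially no hard part here — the argument is short once the degree computation is made explicit. The only thing to be careful about is the direction of the implication in Theorem~\ref{Thm1} and making sure we invoke it correctly: $\mathsf{DL}(G) \geq k$ \emph{implies} $H_k(G)$ has a hamiltonian path, and it is this hamiltonian path that cannot exist. I would present the proof in roughly four sentences: state the degree computation for uniquely eccentric central vertices, note that degree-$1$ vertices must be path endpoints, observe that three of them is one too many, and conclude via Theorem~\ref{Thm1}.
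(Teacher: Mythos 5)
Your proof is correct and is essentially the paper's argument in different clothing: the paper works directly with the labelling (at least one of the three central vertices must receive an interior label $i$ with $2 \leq i \leq n-1$, yet a uniquely eccentric central vertex cannot be at distance $\geq r(G)$ from both the vertices labelled $i-1$ and $i+1$), while you route the same pigeonhole through Theorem \ref{Thm1} as ``three degree-one vertices in $H_{r(G)}(G)$, but a hamiltonian path has only two endpoints.'' Both versions hinge on the identical observation that a uniquely eccentric central vertex has exactly one vertex at distance $\geq r(G)$ from it, so no gap here.
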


\begin{proof}
Suppose that $G$ contains $n$  vertices that are labelled $1, \dots , n$. Further, suppose that $\mathsf{DL}(G) = r(G)$. At least one of the three hypothesized central vertices must receive a label $i$, where $2 \leq i \leq n-1$. Consider the vertices labelled $i-1$ and $i+1$. Since the vertex labelled $i$ is uniquely eccentric, either $d(i-1,i) < r(G)$ or $d(i,i+1) < r(G)$. This contradicts the assumption that $\mathsf{DL}(G) = r(G)$.
\end{proof}

\begin{Example}
{\rm The  graph $L_{4,6}$, has radius $r(L_{4,6}) = 5$ and there are four central vertices, namely the vertices $s,t,u$ and $v$ identified in Figure \ref{fig1}. Each of these four central vertices is uniquely eccentric: $d(s,w) = 5$, $d(u,x) = 5$,  $d(v,y) = 5$  and $d(t,z) = 5$.
(To illustrate, vertex $w$ is the only vertex that is distance five from vertex $s$. A path of length five from $s$ to $w$ is indicated by the blackened vertices in Figure \ref{fig1}.) 
Therefore, from Lemma \ref{uc.lem}, it follows that $\mathsf{DL}(L_{4,6}) \leq 4$.
We will prove a bit later that  $\mathsf{DL}(L_{4,6}) = 4$.
}\end{Example}

\medskip

More generally, we have the following upper bound.

\begin{Theorem}
\label{uc.thm}
Suppose $m$ and $n$ are both even. Then $\mathsf{DL}(L_{m,n}) \leq (m+n)/2 - 1$.
\end{Theorem}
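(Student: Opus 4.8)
The plan is to deduce this from Lemma~\ref{three.lem}, by exhibiting three (in fact four) central vertices of $L_{m,n}$ that are each uniquely eccentric. To set up, write the vertices of $L_{m,n}$ as pairs $(i,j)$ with $1\le i\le m$ and $1\le j\le n$, so that the distance is the $\ell_1$-distance $d\big((i,j),(i',j')\big)=|i-i'|+|j-j'|$. Since the two coordinates contribute independently, the eccentricity splits as $\epsilon(i,j)=\max(i-1,\,m-i)+\max(j-1,\,n-j)$. Using that $m$ is even, $\max(i-1,m-i)$ attains its minimum value $m/2$ exactly at $i\in\{m/2,\,m/2+1\}$, and similarly for the second coordinate with $n$ even. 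Hence $r(L_{m,n})=(m+n)/2$, and the central vertices are precisely the four vertices $(i,j)$ with $i\in\{m/2,\,m/2+1\}$ and $j\in\{n/2,\,n/2+1\}$.

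The next step is to check that each of these four central vertices is uniquely eccentric, with its unique farthest vertex being the ``opposite'' corner. Take for instance $v=(m/2,n/2)$. Over $i'\in\{1,\dots,m\}$ the quantity $|m/2-i'|$ is maximized only at $i'=m$ (where it equals $m/2$, strictly larger than its value $m/2-1$ at $i'=1$), and likewise $|n/2-j'|$ is maximized only at $j'=n$; so the corner $(m,n)$ is the unique vertex at distance $\epsilon(v)=(m+n)/2$ from $v$. The same computation applied to $(m/2,n/2+1)$, $(m/2+1,n/2)$, and $(m/2+1,n/2+1)$ shows that each has a unique farthest vertex, namely $(m,1)$, $(1,n)$, and $(1,1)$ respectively. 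Since $m,n\ge 2$, these four central vertices exist, so Lemma~\ref{three.lem} applies and gives $\mathsf{DL}(L_{m,n})\le r(L_{m,n})-1=(m+n)/2-1$.

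The only point that needs care is the uniqueness of the farthest vertex, and this is exactly where both $m$ and $n$ being even is used: if $m$ were odd, the middle value $\max(i-1,m-i)$ would be attained at both $i'=1$ and $i'=m$, so the central vertices would fail to be uniquely eccentric and Lemma~\ref{three.lem} could not be invoked. (If one prefers, the same argument can be phrased directly, as in the proof of Theorem~\ref{Thm4}: in any $r(G)$-dispersed labelling some central vertex receives a label $i$ with $2\le i\le n-1$, and both neighbours $i-1,i+1$ in the labelling would have to be its unique farthest vertex, a contradiction.)
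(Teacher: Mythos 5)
Your proof is correct and follows essentially the same route as the paper's: identify the four central vertices of $L_{m,n}$ (for $m,n$ even), verify that each is uniquely eccentric with its farthest vertex being the opposite corner, and invoke the lemma on three uniquely eccentric central vertices to get $\mathsf{DL}(L_{m,n}) \leq r(L_{m,n}) - 1 = (m+n)/2 - 1$. The only difference is that you carry out explicitly the coordinate computation that the paper dismisses as ``easy to see.''
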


\begin{proof}
When $m$ and $n$ are even, it is easy to see that $r(L_{m,n}) = (m+n)/2$ and this graph has four central vertices, each of which is uniquely eccentric.
Apply Lemma \ref{uc.lem}.
\end{proof}

For other values of $m$ and $n$, we have the following simple results, which we state without proof.

\bigskip

\begin{Lemma} \mbox{}
\label{gridradius.lem}
\begin{enumerate}
\item Suppose $m$ and $n$ are both odd. Then $r(L_{m,n}) = (m+n)/2 - 1$ and $L_{m,n}$ has one central vertex, which is not uniquely eccentric.
\item Suppose $m+n$ is odd. Then $r(L_{m,n}) = (m+n-1)/2$ and $L_{m,n}$ has two central vertices, neither of which is uniquely eccentric.
\end{enumerate}
\end{Lemma}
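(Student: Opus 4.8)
The plan is to work directly with the grid graph $L_{m,n}$, whose vertex set we identify with $\{0,1,\dots,m-1\} \times \{0,1,\dots,n-1\}$ and whose metric is the $\ell_1$ (taxicab) distance: $d((a,b),(c,d)) = |a-c| + |b-d|$. With this explicit description, eccentricity and radius become pure coordinate-wise optimization problems that decouple into the two axes. The key observation I would establish first is that for a vertex $v = (a,b)$, the eccentricity is $\epsilon(v) = \max\{a, m-1-a\} + \max\{b, n-1-b\}$, since the farthest point is obtained independently in each coordinate by going to whichever endpoint of that axis is farther. Consequently $r(L_{m,n}) = \bigl(\min_a \max\{a,m-1-a\}\bigr) + \bigl(\min_b \max\{b,n-1-b\}\bigr)$, and each summand is $\lceil (m-1)/2 \rceil$ and $\lceil (n-1)/2 \rceil$ respectively.

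For part 1, when $m$ and $n$ are both odd, write $m = 2p+1$, $n = 2q+1$. Then $\lceil (m-1)/2 \rceil = p$ and $\lceil (n-1)/2 \rceil = q$, so $r(L_{m,n}) = p + q = (m+n)/2 - 1$, as claimed. The coordinate minimizer in each axis is unique — namely $a = p$ and $b = q$ — so the unique central vertex is $(p,q)$. To see it is not uniquely eccentric, note that at $a = p$ we have $\max\{p, m-1-p\} = p$ achieved by \emph{both} $c = 0$ and $c = m-1$ (since $m-1-p = p$); hence there are (at least four, in fact) vertices at distance $\epsilon = p+q$ from $(p,q)$, namely all four corners. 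So the central vertex is not uniquely eccentric.

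For part 2, when $m+n$ is odd, exactly one of $m,n$ is even; say $m = 2p$ is even and $n = 2q+1$ is odd (the other case is symmetric). Then $\lceil (m-1)/2 \rceil = p$ but the minimizer of $\max\{a, m-1-a\}$ over $a \in \{0,\dots,m-1\}$ is achieved at both $a = p-1$ and $a = p$ (both give value $p$), whereas in the odd axis the minimizer $b = q$ is unique. Hence $r(L_{m,n}) = p + q = (m+n-1)/2$ and there are exactly two central vertices, $(p-1, q)$ and $(p, q)$. Neither is uniquely eccentric: at $b = q$ in the odd axis, $\max\{q, n-1-q\} = q$ is attained by both $d = 0$ and $d = n-1$, so each central vertex has at least two farthest vertices (differing in the last coordinate).

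I do not anticipate a genuine obstacle here; the main care needed is simply to verify the "attained at both endpoints / unique minimizer" bookkeeping in each parity case and to confirm that the claimed vertices are central and the claimed witnesses are at the correct distance — all of which follows mechanically from the decoupled $\ell_1$ formula for eccentricity. Since the paper states this lemma "without proof," a brief remark citing the $\ell_1$ structure of the grid metric may suffice in place of the full argument above.
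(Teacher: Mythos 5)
The paper states this lemma explicitly without proof, so there is nothing to compare against; your argument is correct and is the standard verification one would supply. Writing $\epsilon((a,b))=\max\{a,m-1-a\}+\max\{b,n-1-b\}$ and minimizing each summand separately correctly yields the radius, the count of central vertices (unique minimizer per odd axis, two minimizers per even axis), and the non-unique-eccentricity claims (both endpoints of an odd axis attain the max). The only point worth flagging is that the ``not uniquely eccentric'' conclusions implicitly require the odd dimension(s) to be at least $3$ --- e.g., $L_{2,1}$ has uniquely eccentric central vertices --- but this is harmless under the paper's standing assumption that $m,n\ge 2$ for grids.
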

Thus Lemma \ref{uc.lem} cannot be applied in these cases, so we cannot rule out the possibility that $\mathsf{DL}(L_{m,n}) = r(L_{m,n})$ if at least one of $m$ and $n$ is odd. In fact, we will prove in this section that 
$\mathsf{DL}(L_{m,n}) = r(L_{m,n})$ if at least one of $m$ and $n$ is odd; and $\mathsf{DL}(L_{m,n}) = r(L_{m,n})-1$ if $m$ and $n$ are both even. 

First, we solve the case of $2 \times n$ grids.

\begin{Theorem}
\label{2nodd.thm}
$\mathsf{DL}(L_{2,n}) = r(L_{2,n}) = (n+1)/2$ for all odd $n \geq 3.$
\end{Theorem}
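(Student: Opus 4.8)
The plan is to prove $\mathsf{DL}(L_{2,n}) = (n+1)/2$ for odd $n$ by exhibiting an explicit $\frac{n+1}{2}$-dispersed labelling; the matching upper bound $\mathsf{DL}(L_{2,n}) \le r(L_{2,n}) = \frac{n+1}{2}$ is immediate from Theorem~\ref{Thm4}, once we check the elementary fact that $r(L_{2,n}) = (n+1)/2$ when $n$ is odd. So all the real content is the construction. I would index the $2n$ vertices as $(a,b)$ with $a \in \{0,1\}$ (the two rows) and $b \in \{1,\dots,n\}$ (the columns), so that $d\big((a,b),(a',b')\big) = |a-a'| + |b-b'|$. The target distance is $k = (n+1)/2$.

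First I would reduce the problem to a clean combinatorial statement: I want a Hamiltonian path in $H_k(L_{2,n})$, i.e., an ordering of all $2n$ vertices in which consecutive vertices are at $L_1$-distance at least $k$. The key observation to exploit is that two vertices $(a,b)$ and $(a',b')$ with $|b-b'| \ge k$ are automatically far enough regardless of the rows, and that $|b-b'| \ge k$ happens for many column pairs since columns range over $n = 2k-1$ values (so column $j$ is "compatible" with columns $j' \le j-k$ or $j' \ge j+k$ — roughly the opposite half). This suggests mimicking the path construction: interleave columns in the pattern that jumps by about $n/2$ each time. Concretely I would try a labelling that visits columns in an order like $1,\, k+1,\, 2,\, k+2,\, \dots$ (the standard "large-gap" permutation of $\{1,\dots,n\}$ used in the path case of Theorem~\ref{paths.thm}), choosing the row at each step to patch up the one or two consecutive pairs whose column gap falls just short of $k$. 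Since $n$ is odd, the column-jump permutation $1, \tfrac{n+1}{2}{+}1? \dots$ has consecutive differences all equal to $\pm k$ or $\pm(k-1)$; in the places where the difference is $k-1$, adding a row difference of $1$ brings the total to $k$, so alternating or carefully assigning the rows resolves exactly those deficient steps.

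The concrete recipe I would write down and verify: traverse the $n$ columns twice — once through the even-position columns of the large-gap permutation landing in row $0$, then once through the odd-position columns landing in row $1$ — or, more simply, use the two-row analogue of the path labelling where the sequence of $(\text{row},\text{column})$ pairs is forced so that between any two consecutive entries either the column gap is $\ge k$, or the column gap is $k-1$ and the rows differ. I expect the cleanest description to be: label the vertices of row $0$ and row $1$ so that reading off labels $1,2,\dots,2n$ gives column sequence $2,4,\dots,n{-}1,\,n{+}1?$ — adapting the even/odd split from the proof of Theorem~\ref{paths.thm} to have $2n$ slots. I would then do a short case check that every one of the $2n-1$ consecutive pairs has $L_1$-distance $\ge k$, splitting into (i) pairs inside the same row (column gap is large by the path argument), and (ii) pairs straddling the two rows (column gap at least $k-1$, plus $1$ for the row change).

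The main obstacle is getting the transition between the "row $0$ block" and the "row $1$ block" (and the single within-block step whose column gap is smallest) to satisfy the distance bound — at the boundary the column gap may be as small as $0$ or $1$ in a naive interleaving, which a row change of $1$ cannot fix when $k \ge 2$. The fix is to choose the two halves so that the last column used in row $0$ and the first column used in row $1$ differ by at least $k-1$; because $n = 2k-1$, the large-gap permutation of columns naturally pairs each column with one in the "far half," so with the right offset this boundary condition holds. Verifying that one can simultaneously satisfy the boundary condition and keep all internal steps valid is the delicate bookkeeping step, but it is a finite, structured check rather than a deep argument; once the explicit labelling is pinned down, correctness follows by the two-case distance computation above together with Theorem~\ref{Thm1} and Theorem~\ref{Thm4}.
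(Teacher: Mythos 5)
Your overall strategy is exactly the paper's: the upper bound $\mathsf{DL}(L_{2,n}) \le (n+1)/2$ is immediate from Theorem~\ref{Thm4} once one notes $r(L_{2,n}) = (n+1)/2$ for odd $n$, and the whole content of the theorem is an explicit $\frac{n+1}{2}$-dispersed labelling in which consecutive labels sit in different rows, so that a column gap of $\frac{n-1}{2}$ plus the row change of $1$ reaches the target. The problem is that you never actually produce that labelling. Your write-up is a plan with the construction left as a placeholder --- the column orderings you propose contain literal question marks, you offer two or three candidate recipes without committing to one, and you explicitly defer ``the delicate bookkeeping step'' of checking that the boundary between the two row-blocks works. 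Since the upper bound is trivial, the construction \emph{is} the proof, and a proof that says ``with the right offset this boundary condition holds'' without exhibiting the offset has a genuine gap.

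The fix is short. Index vertices as (row, column) with rows $1,2$ and columns $1,\dots,n$, and put label $2j$ at $(1,j)$ for $1 \le j \le n$; fill row $2$, left to right, with the odd labels in the order $n+2,\, n+4,\, \dots,\, 2n-1,\, 1,\, 3,\, \dots,\, n$ (so label $n+2i$ sits at $(2,i)$ for $1 \le i \le \frac{n-1}{2}$ and label $2i-1$ sits at $(2, \frac{n-1}{2}+i)$ for $1 \le i \le \frac{n+1}{2}$). Every pair of consecutive labels then consists of one even and one odd label, hence lies in distinct rows, and a direct computation shows the column gap is always $\frac{n-1}{2}$ or $\frac{n+1}{2}$: for instance, labels $2j-1$ and $2j$ occupy columns $\frac{n-1}{2}+j$ and $j$ when $2j-1 \le n$, and columns $j-\frac{n+1}{2}$ and $j$ otherwise. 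In all cases $d \ge \frac{n-1}{2} + 1 = \frac{n+1}{2}$, which completes the argument you outlined.
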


\begin{proof}
The first row of $L_{2,n}$ is labelled $2, 4, \dots , 2n$ and the second row is labelled
$n+2, n+4, \dots,2n-1, 1, 3, \dots , n$.
\end{proof}

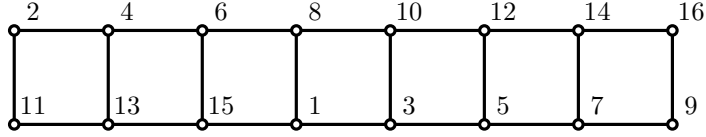
\begin{figure}
\begin{center}


\begin{tikzpicture}[scale=0.25]

\draw [very thick] (0,0) -- (5,0) -- (10,0) -- (15,0) -- (20,0) -- (25,0) -- (30,0) -- (35,0);
\draw [very thick] (0,5) -- (5,5) -- (10,5) -- (15,5) -- (20,5) -- (25,5) -- (30,5) -- (35,5);

\draw [very thick] (0,0) -- (0,5);
\draw [very thick] (5,0) -- (5,5);
\draw [very thick] (10,0) -- (10,5);
\draw [very thick] (15,0) -- (15,5);
\draw [very thick] (20,0) -- (20,5);
\draw [very thick] (25,0) -- (25,5);
\draw [very thick] (30,0) -- (30,5);
\draw [very thick] (35,0) -- (35,5);

\draw [very thick,fill = white] (0,0) circle [radius=.25];
\draw [very thick,fill = white] (0,5) circle [radius=.25];
\draw [very thick,fill = white] (10,0) circle [radius=.25];
\draw [very thick,fill = white] (10,5) circle [radius=.25];
\draw [very thick,fill = white] (5,0) circle [radius=.25];
\draw [very thick,fill = white] (5,5) circle [radius=.25];
\draw [very thick,fill = white] (15,0) circle [radius=.25];
\draw [very thick,fill = white] (15,5) circle [radius=.25];
\draw [very thick,fill = white] (20,0) circle [radius=.25];
\draw [very thick,fill = white] (20,5) circle [radius=.25];
\draw [very thick,fill = white] (25,0) circle [radius=.25];
\draw [very thick,fill = white] (25,5) circle [radius=.25];
\draw [very thick,fill = white] (30,0) circle [radius=.25];
\draw [very thick,fill = white] (30,5) circle [radius=.25];
\draw [very thick,fill = white] (35,0) circle [radius=.25];
\draw [very thick,fill = white] (35,5) circle [radius=.25];

\node at (1,6) {\small{$2$}};
\node at (6,6) {\small{$4$}};
\node at (11,6) {\small{$6$}};
\node at (16,6) {\small{$8$}};
\node at (21,6) {\small{$10$}};
\node at (26,6) {\small{$12$}};
\node at (31,6) {\small{$14$}};
\node at (36,6) {\small{$16$}};

\node at (1,1) {\small{$11$}};
\node at (6,1) {\small{$13$}};
\node at (11,1) {\small{$15$}};
\node at (16,1) {\small{$1$}};
\node at (21,1) {\small{$3$}};
\node at (26,1) {\small{$5$}};
\node at (31,1) {\small{$7$}};
\node at (36,1) {\small{$9$}};

\end{tikzpicture}

\vspace{-.15in}

\end{center}
\caption{A $4$-dispersed labelling of $L_{2,8}$ }
\label{fig2,8}
\end{figure}

\begin{Theorem}
\label{2neven.thm}
$\mathsf{DL}(L_{2,n}) = r(L_{2,n}) - 1 = n/2$ for all even $n \geq 4.$
\end{Theorem}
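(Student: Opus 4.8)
The plan is to prove the two inequalities $\mathsf{DL}(L_{2,n}) \le n/2$ and $\mathsf{DL}(L_{2,n}) \ge n/2$ separately. For the upper bound, note that when $n$ is even, $r(L_{2,n}) = n/2 + 1$ (the diameter is $n$, realized by the two pairs of antipodal corners). I would first check that $L_{2,n}$ has exactly four central vertices — the two vertices in the middle column (if $n/2$ is odd, i.e. $n \equiv 2 \bmod 4$) or the four vertices in the two middle columns — and verify that each is uniquely eccentric: a central vertex $v$ in column $n/2+1$ (using columns $1,\dots,n$... I'd set up coordinates carefully) has a unique vertex at distance $r(L_{2,n})$ from it, namely the opposite corner. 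Actually, since the earlier Theorem~\ref{uc.thm} requires \emph{both} dimensions even and here one dimension is $2$, I should re-examine: with $m=2$ and $n$ even, $m+n$ is even, so by the same reasoning $r(L_{2,n}) = (2+n)/2 = n/2+1$, there are four central vertices, and each is uniquely eccentric, so Lemma~\ref{uc.lem} gives $\mathsf{DL}(L_{2,n}) \le r(L_{2,n}) - 1 = n/2$. I would state this cleanly, perhaps just citing Theorem~\ref{uc.thm} directly since $m=2$ is even.

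For the lower bound, the plan is to exhibit an explicit $n/2$-dispersed labelling, generalizing the examples in Figures~\ref{fig2,7} and~\ref{fig2,8}. Label the top row left-to-right with the even integers $2, 4, \dots, 2n$ and the bottom row left-to-right with the sequence that starts $n+2, n+4, \dots$ up to the largest odd number below $2n$, then wraps to $1, 3, 5, \dots$. Concretely, writing columns $1, \dots, n$, put label $2j$ in top-row column $j$, and in bottom-row column $j$ put the odd number $n + 2j$ reduced appropriately — i.e. bottom-row column $j$ gets $n+2j$ if $n+2j \le 2n-1$ and $n+2j - 2n = 2j - n$ otherwise. (This matches Figure~\ref{fig2,8} with $n=8$: bottom row $11,13,15,1,3,5,7,9$.) Then I need to verify that consecutive labels $i, i+1$ are at distance $\ge n/2$ in the grid. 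The consecutive pairs come in two types: (a) an even number $2j$ in the top row and the adjacent odd numbers $2j-1, 2j+1$ in the bottom row; and (b) wrap-around within the odd/even structure. The key computation: label $2j$ (top, column $j$) and label $2j+1$ (bottom row) — I'd determine which column holds $2j+1$ and check the $\ell_1$-distance (horizontal shift plus $1$ for the row change) is at least $n/2$; the construction is designed so the column of $2j \pm 1$ differs from column $j$ by roughly $n/2$.

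The main obstacle I expect is the bookkeeping in the lower-bound verification: pinning down exactly which column each odd label occupies after the wrap, handling the two cases $n \equiv 0$ and $n \equiv 2 \bmod 4$ (which may shift parities of the middle columns and hence the offsets), and confirming the horizontal displacement never dips below $n/2 - 1$ so that, together with the $+1$ from switching rows, the distance is at least $n/2$. A clean way to organize this is to index bottom-row columns by the label's position in the wrapped odd sequence and show that label $\ell$ and label $\ell+1$ always sit in columns whose indices differ by exactly $n/2 - 1$ (mod the wrap), making $d(\ell, \ell+1) = (n/2 - 1) + 1 = n/2$ in all cases. I would present the labelling, state the distance claim as a short sub-lemma, and dispatch the finitely many case distinctions with a brief argument rather than an exhaustive table; the matching upper bound from Lemma~\ref{uc.lem} then completes the proof.
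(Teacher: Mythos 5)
Your approach is the same as the paper's: the upper bound is Theorem~\ref{uc.thm} (i.e., Lemma~\ref{uc.lem}) applied with $m=2$, and the lower bound is the explicit interleaved labelling of Figure~\ref{fig2,8}; the paper's proof simply exhibits the labelling (top row $2,4,\dots,2n$; bottom row $n+3, n+5, \dots, 2n-1, 1, 3, \dots, n+1$) and leaves the distance check to the reader. Two small corrections to your write-up before you flesh it out. First, for even $n$ the wrapped odd sequence in the bottom row starts at $n+3$, not $n+2$ (which is even); the column-$j$ entry is $n+2j+1$, reduced by $2n$ when it exceeds $2n-1$, not $n+2j$. Second, your proposed sub-lemma that consecutive labels always sit in columns differing by exactly $n/2-1$ is false: in $L_{2,8}$ the horizontal displacements between consecutive labels take the values $3$, $4$ and $5$ (e.g., $d(2,3)=5$ and $d(11,12)=6$). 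The statement you actually need, and which does hold, is the weaker one you also mention: the horizontal displacement is always at least $n/2-1$, so that together with the forced row change $d(i,i+1)\ge n/2$. With those fixes the argument goes through; the case split on $n \bmod 4$ you anticipate is not needed, as the construction and verification are uniform in $n$.
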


\begin{proof}
The first row of $L_{2,n}$ is labelled $2, 4, \dots , 2n$ and the second row is labelled
$n+3, n+5, \dots,2n-1, 1, 3, \dots , n+1$.
\end{proof}

See Figures \ref{fig2,7} and \ref{fig2,8} for illustrations of the constructions in Theorems \ref{2nodd.thm} and \ref{2neven.thm}, respectively.

\medskip

Suppose $m\geq 4$ is even. We can construct optimal labellings of $m \times n$ grids from optimal labellings of $2  \times n$ grids recursively. 

\begin{Theorem}
\label{evenm.thm}
Suppose $m\geq 2$ is even.
Then \[\mathsf{DL}(L_{m,n}) = 
\begin{cases}
\frac{m+n-2}{2} & \text{if $n$ is even} \\
\frac{m+n-1}{2} & \text{if $n$ is odd.} 
\end{cases}
\]
\end{Theorem}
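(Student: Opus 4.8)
The plan is to pair the upper bounds (which are already available) with a matching construction obtained by induction on $m$ through even values. For the upper bound: when $n$ is even, both $m$ and $n$ are even, so Theorem~\ref{uc.thm} gives $\mathsf{DL}(L_{m,n}) \le (m+n)/2 - 1 = (m+n-2)/2$; when $n$ is odd, $m+n$ is odd, so Lemma~\ref{gridradius.lem}(2) gives $r(L_{m,n}) = (m+n-1)/2$ and Theorem~\ref{Thm4} gives $\mathsf{DL}(L_{m,n}) \le r(L_{m,n}) = (m+n-1)/2$. Thus it remains only to exhibit, for every even $m \ge 2$ and every $n$, a $k$-dispersed labelling of $L_{m,n}$ with $k$ equal to the claimed value (equivalently $k = \lceil (m+n)/2 \rceil - 1$).

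For the lower bound I would induct on $m$ over the even integers. The base case $m = 2$ is exactly Theorem~\ref{2nodd.thm} (for $n$ odd) and Theorem~\ref{2neven.thm} (for $n$ even). For the inductive step it suffices to prove $\mathsf{DL}(L_{m,n}) \ge \mathsf{DL}(L_{m-2,n}) + 1$ for even $m \ge 4$, since adding $1$ to the formula for $L_{m-2,n}$ reproduces the formula for $L_{m,n}$ in both parities of $n$. Concretely, I would start from a $k$-dispersed labelling $\phi$ of $L_{m-2,n}$ with $k = \mathsf{DL}(L_{m-2,n})$, regard $L_{m,n}$ as $L_{m-2,n}$ (occupying $m-2$ of the rows) together with a $2 \times n$ strip of $2n$ further vertices, keep a relabelled copy of $\phi$ on the old vertices, put an optimal $L_{2,n}$-type pattern as in Theorems~\ref{2nodd.thm}--\ref{2neven.thm} on the new strip, and then interleave the two label sequences into one bijection onto $\{1,\dots,mn\}$. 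Each newly created consecutive pair is of one of three kinds — two old vertices, two new vertices, or one of each — and the old--old and new--new pairs inherit enough slack from the component labellings (consecutive labels within each component differ by more than $1$ in the merged sequence), so that the distance bound $k+1$ is met there; the old--new pairs are where the construction has to be arranged carefully, choosing the relative placement of the two patterns (and, where needed, a column offset) so that a small vertical separation between an old and a new vertex is always offset by a large horizontal separation.

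The main obstacle is exactly this last point: controlling the $\Theta(n)$ transitions between the ``old'' block and the new $2 \times n$ strip. Unlike the interiors of the two blocks, whose dispersal is guaranteed by induction and by Theorems~\ref{2nodd.thm}--\ref{2neven.thm}, the cross transitions depend delicately on how the two patterns are aligned; a naive alternation fails because two vertices in adjacent rows and nearby columns are close. I expect the argument to break into sub-cases according to the residues of $m$ and $n$ modulo small integers, exploiting the cyclic-in-column structure of the $2 \times n$ constructions of Section~\ref{sec2} (the rotated second row) to ensure that, along the merged sequence, the column coordinate always shifts by roughly $n/2$ whenever the row coordinate shifts by only a little; one may also wish to invoke the symmetry $L_{m,n} = L_{n,m}$ to normalize the relative sizes of $m$ and $n$. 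Once the explicit interleaving is written down, checking the distance bound at every step is routine but somewhat lengthy bookkeeping, and combining it with the upper bounds above completes the proof of Theorem~\ref{evenm.thm}.
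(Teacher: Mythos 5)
Your upper-bound half is fine and matches the paper (Theorem~\ref{uc.thm} for $n$ even; Theorem~\ref{Thm4} with Lemma~\ref{gridradius.lem}(2) for $n$ odd). The problem is the lower bound. Your inductive step asserts $\mathsf{DL}(L_{m,n}) \ge \mathsf{DL}(L_{m-2,n}) + 1$, to be proved by merging an optimal labelling of $L_{m-2,n}$ with an optimal labelling of a new $2\times n$ strip. But there is a counting obstruction to the key claim in parentheses, namely that ``consecutive labels within each component differ by more than $1$ in the merged sequence'': the old block has $(m-2)n$ vertices and the new strip only $2n$, so for $m\ge 6$ most adjacent pairs in the merged sequence are necessarily old--old pairs that were already consecutive in $\phi$. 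If the old block sits on $m-2$ contiguous rows it is an isometric subgraph of $L_{m,n}$, so those pairs have distance exactly what they had before, i.e.\ only $\ge \mathsf{DL}(L_{m-2,n})$, and the $+1$ is never gained. Spreading the old rows non-contiguously does not fix this either, since only pairs straddling the inserted strip gain vertical distance. The deeper issue is that your induction uses only the \emph{existence} of an optimal labelling of $L_{m-2,n}$, whereas any argument that boosts every consecutive pair by a fixed amount must exploit the \emph{structure} of the labelling (which row each of the two vertices of a consecutive pair lies in). The unresolved ``old--new transition'' bookkeeping you flag at the end compounds this, but the old--old pairs are already fatal.

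The paper avoids induction entirely: it takes $m/2$ disjoint translated copies of the optimal $L_{2,n}$ labelling (labels $\{2(k-1)n+1,\dots,2kn\}$ in copy $k$), used consecutively rather than interleaved as label sequences, and interleaves them \emph{geometrically} --- copy $k$ occupies rows $k$ and $\tfrac{m}{2}+k$. The point is that in the $L_{2,n}$ labellings of Theorems~\ref{2nodd.thm} and~\ref{2neven.thm} one row carries all even labels and the other all odd labels, so \emph{every} consecutive pair within a copy straddles the two rows and its vertical separation jumps from $1$ to $m/2$, adding $m/2-1$ to its distance in one shot; the only remaining checks are the $\tfrac{m}{2}-1$ transitions between the last label of one copy and the first label of the next, which are verified directly. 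If you want to salvage your plan, you would need to strengthen the induction hypothesis to carry this kind of row-parity structure along, at which point you have essentially reconstructed the paper's direct construction.
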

\begin{proof}
For odd $n$, we start with the $\frac{n+1}{2}$-dispersed labelling of $L_{2,n}$ constructed in Theorem \ref{2nodd.thm};
for even $n$, we start with the $\frac{n}{2}$-dispersed labelling of $L_{2,n}$ constructed in Theorem \ref{2neven.thm}.
Let the first row of one of these optimal labellings be denoted $A$ and let the second row be denoted $B$. For an integer $i$, let $A+i$ ($B+i$, resp.) denote $A$ ($B$, resp) with $i$ added to every label. 
Then construct the labelling of $L_{m,n}$ having the  $m$ rows indicated in Figure \ref{interleave1.fig}. 

See Figure \ref{fig4,7} for an illustration of the construction when $m=4$ and $n = 7$.

 Basically, we have interleaved $m/2$ isomorphic copies of the 
 labelling of $L_{2,n}$. For $1 \leq k \leq \frac{m}{2}$, 
 rows $k$ and $\frac{m}{2} + k$ comprise a copy of of the labelling of $L_{2,n}$ that use the labels
 $\{ 2(k-1)n + 1 , \dots , 2kn\}$. This labelling adds $2(k-1)n$ to each label in the ``original'' labelling of $L_{2,n}$. Note also that the two rows in each copy of $L_{2,n}$ are separated by $\frac{m}{2} -1$ other rows.
 
 It is not hard to prove that the result is an $\frac{m+n-2}{2}$-dispersed labelling of $L_{m,n}$ when $n$ is even, and an
 $\frac{m+n-1}{2}$-dispersed labelling of $L_{m,n}$ when $n$ is odd. 
 The interleaving increases the distances between consecutively labelled vertices within a particular 
copy of the $L_{2,n}$ by $m/2 - 1$. (This is because, for any two consecutively labelled vertices, one is in an $A+2kn$ (for some $k$) and the other is in $B+2kn$, so the vertical distance between them has increased from one 
(in the original $L_{2,n}$) to $m/2$.) In the case of even $n$, the minimum distance of $n/2$ is increased to  $n/2 + m/2 -1 = (m+n-2)/2$, as desired. For odd $n$, 
 the minimum distance of $(n+1)/2$ is increased to  $(n+1)/2 + m/2 -1 = (m+n-1)/2$, as desired.
 
 It is also necessary to consider the distance between the ``last vertex'' (i.e., the vertex with the largest label) in one copy of $L_{2,n}$ and the ``first'' vertex (i.e., the vertex with the smallest label) in the next copy. 
 The first vertex in a copy of of $L_{2,n}$ is  the middle element (when $n$ is odd) or the leftmost of the two middle elements (when $n$ is even) of row containing $B+2kn$, i.e., row $\frac{m}{2}+k$. The last  vertex is the rightmost vertex in the row containing $A + 2(k-1)n$, i.e., row $k-1$. The distance between these two vertices in the labelling of $L_{2,n}$ is $(n+1)/2$ when $n$ is odd, and $(n+2)/2$ when $n$ is even. The interleaving changes the vertical distance between the two vertices from $1$ to ${m}/{2}+k - (k-1) = {m}/{2} +1$.  The result is that the distance increases by $m/2$, which is more than what is required in the resulting labelling of
 $L_{m,n}$.
\end{proof}

 \begin{figure}

\[
\begin{array}{c|c|}
\cline{2-2} 
\text{row 1} & A \\ \cline{2-2}
\text{row 2} & A+2n \\ \cline{2-2}  
&  \vdots \\ \cline{2-2}  
\text{row $\frac{m}{2}$} & A + (m-2)n\\ \cline{2-2}  
\text{row $\frac{m}{2} + 1$} & B\\ \cline{2-2}  
\text{row $\frac{m}{2} + 2$} & B+2n\\ \cline{2-2}  
 &\vdots\\ \cline{2-2}  
\text{row $m$} & B + (m-2)n \\ \cline{2-2}
\end{array}
\]


\caption{Interleaved labellings of $L_{2,n}$}
\label{interleave1.fig}
\end{figure}

We now study $L_{m,n}$ for odd $m$. We can also assume that $n$ is odd, because the case of odd $m$ and even $n$ is 
equivalent to the case where $m$ is even and $n$ is odd, and this is covered by Theorem \ref{evenm.thm}.
We begin with $m=3$.


\begin{Theorem}
\label{m=3.thm}
$\mathsf{DL}(L_{3,n}) = r(L_{3,n}) = (n+1)/2$ for all odd $n \geq 3$.
\end{Theorem}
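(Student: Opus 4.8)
The plan is to establish the two inequalities separately. For the upper bound, note that since $m=3$ and $n$ are both odd, Lemma~\ref{gridradius.lem}(1) gives $r(L_{3,n}) = (3+n)/2 - 1 = (n+1)/2$, so $\mathsf{DL}(L_{3,n}) \le (n+1)/2$ is immediate from Theorem~\ref{Thm4}. It therefore suffices to exhibit an $(n+1)/2$-dispersed labelling of $L_{3,n}$, which will simultaneously reprove the radius formula's role and finish the theorem.

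For the construction, write $n = 2t+1$, so the target dispersion is $t+1$, and recall that in $L_{3,n}$ the distance between a vertex in row $\rho$, column $c$ and a vertex in row $\rho'$, column $c'$ is $|\rho-\rho'| + |c-c'|$. The idea is to fill the $n$ columns in three ``passes'', using the reordering of columns obtained by interleaving the two halves of $\{1,\dots,n\}$: let $\pi = (1,\, t+2,\, 2,\, t+3,\, 3,\, t+4,\, \dots,\, t,\, 2t+1,\, t+1)$, a permutation whose consecutive entries differ by $t+1$ (when the first entry sits at an odd position of $\pi$) or by $t$ (when it sits at an even position). Now put labels $1,\dots,n$ into the columns one at a time in the order $\pi$, then labels $n+1,\dots,2n$ in the order $\pi$ again, then labels $2n+1,\dots,3n$ a third time; within each column, cycle through the three rows with a one-step shift determined by the parity of that column's position in $\pi$. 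Concretely, the $j$-th column in the list $\pi$ receives the labels $j,\, n+j,\, 2n+j$, placed in rows $(1,2,3)$ if $j$ is odd and in rows $(2,3,1)$ if $j$ is even.

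To verify this is $(t+1)$-dispersed, consider a transition from label $i$ to label $i+1$. If $i \not\equiv 0 \pmod n$, the two labels lie in columns consecutive in $\pi$, so the horizontal distance is $t$ or $t+1$. When it is $t+1 = (n+1)/2$ we are done regardless of rows; when it is $t$ — which occurs precisely when the earlier of the two columns is at an even position of $\pi$ — the parity-dependent row shift forces the two labels into different rows, so the total distance is at least $t+1$. The remaining transitions $n\to n+1$ and $2n\to 2n+1$ run from the last column of $\pi$ (position $n$, odd, which is column $t+1$) back to its first column (position $1$, odd, which is column $1$), a horizontal jump of $t$; here one checks directly from the row rule that the rows differ (labels $n,n+1$ land in rows $1,2$ and labels $2n,2n+1$ land in rows $2,3$), so again the distance is at least $t+1$. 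Combining with the upper bound yields $\mathsf{DL}(L_{3,n}) = (n+1)/2 = r(L_{3,n})$.

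The only delicate step is the row bookkeeping: one must arrange the within-column row assignments so that \emph{every} ``short'' transition (those of horizontal distance exactly $t$) connects two distinct rows, and this must hold across the two pass boundaries as well as inside each pass. This is exactly what forces the parity-dependent cyclic shift instead of a fixed row order, and it is the point I would check most carefully; everything else is a routine finite case analysis, and it may be cleanest to display the resulting labelling in a small figure (say $n=7$) in the style of Figures~\ref{fig2,7} and \ref{fig2,8}.
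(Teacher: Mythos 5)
Your proof is correct, and your construction is genuinely different from the one in the paper. The paper labels the grid by a single ``diagonal'' rule: starting from the bottom-left corner, each step moves $(n-1)/2$ columns to the right and one row up, both modulo the grid dimensions; injectivity then rests on a Chinese-remainder argument needing $\gcd(n,3)=1$, so the paper must split off the case $n\equiv 0 \bmod 3$ and patch the rule there (placing labels $n+1,\dots,3n$ one column to the right of labels $1,\dots,2n$). Your construction instead visits the columns three times in the interleaved order $\pi$ (which is in fact the orbit of adding $(n+1)/2$ modulo $n$, the ``mirror'' of the paper's column step), deposits the labels $j,\,n+j,\,2n+j$ all in the $j$-th visited column, and uses a parity-dependent cyclic row order within each column. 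Because bijectivity is built in column-by-column rather than enforced step-by-step, you get a single uniform rule for all odd $n$ with no number-theoretic verification and no case split on $n \bmod 3$; the price is the somewhat fiddlier row bookkeeping you flag at the end, which you have in fact resolved correctly (every horizontal-distance-$t$ transition, including the two pass boundaries $n\to n+1$ and $2n\to 2n+1$, lands in distinct rows). The upper bound via Lemma~\ref{gridradius.lem} and Theorem~\ref{Thm4} matches what the paper leaves implicit. Both arguments are sound; yours trades the paper's one-line labelling rule for uniformity across all odd $n$.
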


\begin{proof}
It suffices to present an $\frac{n+1}{2}$-dispersed labelling of $L_{3,n}$ when $n$ is odd.

First, suppose that $n \not\equiv 0 \bmod 3$. To avoid cumbersome language, we refer to the vertex labelled $j$ as ``vertex $j$,'' etc. Vertex 1 is in the bottom left corner of the grid.
To get from vertex $j$ to vertex $j+1$, we proceed $(n-1)/2$ vertices to the right and one row up, wrapping around if necessary. 
So the distance from vertex $j$ to vertex $j+1$ is at least $(n-1)/2 + 1 = (n+1)/2$. (Observe that wrapping around a row and/or column cannot decrease the distance between the two vertices. If vertex $j+1$ is in a smaller column than vertex $j$, due to wrapping, then the horizontal distance between vertices $j$ and $j+1$ is $(n+1)/2$ instead of $(n-1)/2$. 
If vertex $j$ is in the top row, then vertex $j+1$ is in the bottom row and the vertical distance between these two vertices is two instead of one.) 

See Figure \ref{fig3,5} for the labelling obtained when $n=5$.

We need to verify that all $3n$ vertices are labelled exactly once. Suppose that vertices $j$ and $j'$ are identical. Then $j \equiv j' \bmod 3$ and $(n-1)j/2 \equiv (n-1)j'/2 \bmod n$. Since $\gcd((n-1)/2,n) = 1$, the second congruence becomes $j \equiv j' \bmod n$. Since $\gcd(n,3) = 1$, the two congruences imply that $j \equiv j' \bmod 3n$. 

It remains to consider the cases where $n \equiv 0 \bmod 3$. If we follow the procedure described above, vertices $1$ and $n+1$ will be identical (as will various other pairs of vertices). The following modification will rectify this problem.
\begin{enumerate}
\item Vertex  1 is in the bottom left corner of the grid. Label vertices $2, \dots ,n$ as before:
to get from vertex $j$ to vertex $j+1$, proceed $(n-1)/2$ vertices to the right and one row up, wrapping around if necessary.
\item For $n+1 \leq j \leq 3n$,  vertex  $j$ is immediately to the right of the vertex labelled $j-n$ (wrapping around if necessary).
\end{enumerate} 

See Figure \ref{fig3,9} for the labelling obtained when $n=9$.

Most of the verifications are the same as before.  However, as special cases, we need to check the distance between vertices $n$ and $n+1$, and the distance between vertices $2n$ and $2n+1$.
In both of these cases, they are $(n-1)/2$ columns apart and two rows apart, so the distance between them is $(n-1)/2 + 2 = (n+3)/2$.
\end{proof}

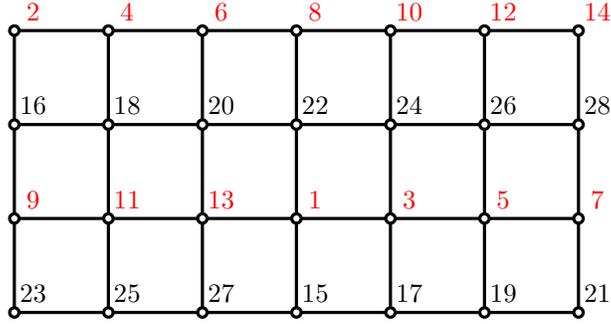
\begin{figure}
\begin{center}


\begin{tikzpicture}[scale=0.25]

\draw [very thick] (0,0) -- (5,0) -- (10,0) -- (15,0) -- (20,0) -- (25,0) -- (30,0);
\draw [very thick] (0,5) -- (5,5) -- (10,5) -- (15,5) -- (20,5) -- (25,5) -- (30,5);
\draw [very thick] (0,10) -- (5,10) -- (10,10) -- (15,10) -- (20,10) -- (25,10) -- (30,10);
\draw [very thick] (0,15) -- (5,15) -- (10,15) -- (15,15) -- (20,15) -- (25,15) -- (30,15);

\draw [very thick] (0,0) -- (0,5) -- (0,10) -- (0,15);
\draw [very thick] (5,0) -- (5,5) -- (5,10) -- (5,15);
\draw [very thick] (10,0) -- (10,5) -- (10,10) -- (10,15);
\draw [very thick] (15,0) -- (15,5) -- (15,10) -- (15,15);
\draw [very thick] (20,0) -- (20,5) -- (20,10) -- (20,15);
\draw [very thick] (25,0) -- (25,5) -- (25,10) -- (25,15);
\draw [very thick] (30,0) -- (30,5) -- (30,10) -- (30,15);

\draw [very thick,fill = white] (0,0) circle [radius=.25];
\draw [very thick,fill = white] (0,5) circle [radius=.25];
\draw [very thick,fill = white] (10,0) circle [radius=.25];
\draw [very thick,fill = white] (10,5) circle [radius=.25];
\draw [very thick,fill = white] (5,0) circle [radius=.25];
\draw [very thick,fill = white] (5,5) circle [radius=.25];
\draw [very thick,fill = white] (15,0) circle [radius=.25];
\draw [very thick,fill = white] (15,5) circle [radius=.25];
\draw [very thick,fill = white] (20,0) circle [radius=.25];
\draw [very thick,fill = white] (20,5) circle [radius=.25];
\draw [very thick,fill = white] (25,0) circle [radius=.25];
\draw [very thick,fill = white] (25,5) circle [radius=.25];
\draw [very thick,fill = white] (30,0) circle [radius=.25];
\draw [very thick,fill = white] (30,5) circle [radius=.25];

\draw [very thick,fill = white] (0,10) circle [radius=.25];
\draw [very thick,fill = white] (0,15) circle [radius=.25];
\draw [very thick,fill = white] (10,10) circle [radius=.25];
\draw [very thick,fill = white] (10,15) circle [radius=.25];
\draw [very thick,fill = white] (5,10) circle [radius=.25];
\draw [very thick,fill = white] (5,15) circle [radius=.25];
\draw [very thick,fill = white] (15,10) circle [radius=.25];
\draw [very thick,fill = white] (15,15) circle [radius=.25];
\draw [very thick,fill = white] (20,10) circle [radius=.25];
\draw [very thick,fill = white] (20,15) circle [radius=.25];
\draw [very thick,fill = white] (25,10) circle [radius=.25];
\draw [very thick,fill = white] (25,15) circle [radius=.25];
\draw [very thick,fill = white] (30,10) circle [radius=.25];
\draw [very thick,fill = white] (30,15) circle [radius=.25];

\node at (1,16) {\small{\trd{$2$}}};
\node at (6,16) {\small{\trd{$4$}}};
\node at (11,16) {\small{\trd{$6$}}};
\node at (16,16) {\small{\trd{$8$}}};
\node at (21,16) {\small{\trd{$10$}}};
\node at (26,16) {\small{\trd{$12$}}};
\node at (31,16) {\small{\trd{$14$}}};

\node at (1,6) {\small{\trd{$9$}}};
\node at (6,6) {\small{\trd{$11$}}};
\node at (11,6) {\small{\trd{$13$}}};
\node at (16,6) {\small{\trd{$1$}}};
\node at (21,6) {\small{\trd{$3$}}};
\node at (26,6) {\small{\trd{$5$}}};
\node at (31,6) {\small{\trd{$7$}}};

\node at (1,11) {\small{$16$}};
\node at (6,11) {\small{$18$}};
\node at (11,11) {\small{$20$}};
\node at (16,11) {\small{$22$}};
\node at (21,11) {\small{$24$}};
\node at (26,11) {\small{$26$}};
\node at (31,11) {\small{$28$}};

\node at (1,1) {\small{$23$}};
\node at (6,1) {\small{$25$}};
\node at (11,1) {\small{$27$}};
\node at (16,1) {\small{$15$}};
\node at (21,1) {\small{$17$}};
\node at (26,1) {\small{$19$}};
\node at (31,1) {\small{$21$}};

\end{tikzpicture}

\vspace{-.15in}

\end{center}
\caption{A $4$-dispersed labelling of $L_{4,7}$ }
\label{fig4,7}
\end{figure}



\begin{figure}
\begin{center}


\begin{tikzpicture}[scale=0.25]

\draw [very thick] (0,0) -- (5,0) -- (10,0) -- (15,0) -- (20,0) ;
\draw [very thick] (0,5) -- (5,5) -- (10,5) -- (15,5) -- (20,5) ;
\draw [very thick] (0,10) -- (5,10) -- (10,10) -- (15,10) -- (20,10);

\draw [very thick] (0,0) -- (0,5) -- (0,10);
\draw [very thick] (5,0) -- (5,5) -- (5,10);
\draw [very thick] (10,0) -- (10,5) -- (10,10);
\draw [very thick] (15,0) -- (15,5) -- (15,10);
\draw [very thick] (20,0) -- (20,5) -- (20,10);

\draw [very thick,fill = white] (0,0) circle [radius=.25];
\draw [very thick,fill = white] (0,5) circle [radius=.25];
\draw [very thick,fill = white] (0,10) circle [radius=.25];
\draw [very thick,fill = white] (10,0) circle [radius=.25];
\draw [very thick,fill = white] (10,10) circle [radius=.25];
\draw [very thick,fill = white] (10,5) circle [radius=.25];
\draw [very thick,fill = white] (5,0) circle [radius=.25];
\draw [very thick,fill = white] (5,5) circle [radius=.25];
\draw [very thick,fill = white] (5,10) circle [radius=.25];
\draw [very thick,fill = white] (15,0) circle [radius=.25];
\draw [very thick,fill = white] (15,10) circle [radius=.25];
\draw [very thick,fill = white] (15,5) circle [radius=.25];
\draw [very thick,fill = white] (20,0) circle [radius=.25];
\draw [very thick,fill = white] (20,5) circle [radius=.25];
\draw [very thick,fill = white] (20,10) circle [radius=.25];

\node at (1,6) {\small{$11$}};
\node at (6,6) {\small{$14$}};
\node at (11,6) {\small{$2$}};
\node at (16,6) {\small{$5$}};
\node at (21,6) {\small{$8$}};

\node at (1,11) {\small{$6$}};
\node at (6,11) {\small{$9$}};
\node at (11,11) {\small{$12$}};
\node at (16,11) {\small{$15$}};
\node at (21,11) {\small{$3$}};

\node at (1,1) {\small{$1$}};
\node at (6,1) {\small{$4$}};
\node at (11,1) {\small{$7$}};
\node at (16,1) {\small{$10$}};
\node at (21,1) {\small{$13$}};

\end{tikzpicture}

\vspace{-.15in}

\end{center}
\caption{A $3$-dispersed labelling of $L_{3,5}$ }
\label{fig3,5}
\end{figure}

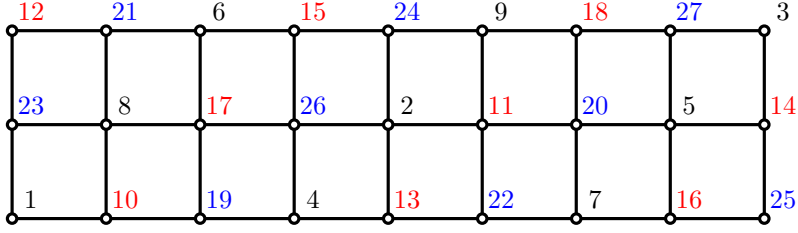
\begin{figure}
\begin{center}


\begin{tikzpicture}[scale=0.25]

\draw [very thick] (0,0) -- (5,0) -- (10,0) -- (15,0) -- (20,0) -- (25,0) -- (30,0) -- (35,0) -- (40,0);
\draw [very thick] (0,5) -- (5,5) -- (10,5) -- (15,5) -- (20,5) -- (25,5) -- (30,5) -- (35,5) -- (40,5);
\draw [very thick] (0,10) -- (5,10) -- (10,10) -- (15,10) -- (20,10) -- (25,10) -- (30,10) -- (35,10) -- (40,10);

\draw [very thick] (0,0) -- (0,5) -- (0,10);
\draw [very thick] (5,0) -- (5,5) -- (5,10);
\draw [very thick] (10,0) -- (10,5) -- (10,10);
\draw [very thick] (15,0) -- (15,5) -- (15,10);
\draw [very thick] (20,0) -- (20,5) -- (20,10);
\draw [very thick] (25,0) -- (25,5) -- (25,10);
\draw [very thick] (30,0) -- (30,5) -- (30,10);
\draw [very thick] (35,0) -- (35,5) -- (35,10);
\draw [very thick] (40,0) -- (40,5) -- (40,10);

\draw [very thick,fill = white] (0,0) circle [radius=.25];
\draw [very thick,fill = white] (0,5) circle [radius=.25];
\draw [very thick,fill = white] (0,10) circle [radius=.25];
\draw [very thick,fill = white] (10,0) circle [radius=.25];
\draw [very thick,fill = white] (10,10) circle [radius=.25];
\draw [very thick,fill = white] (10,5) circle [radius=.25];
\draw [very thick,fill = white] (5,0) circle [radius=.25];
\draw [very thick,fill = white] (5,5) circle [radius=.25];
\draw [very thick,fill = white] (5,10) circle [radius=.25];
\draw [very thick,fill = white] (15,0) circle [radius=.25];
\draw [very thick,fill = white] (15,10) circle [radius=.25];
\draw [very thick,fill = white] (15,5) circle [radius=.25];
\draw [very thick,fill = white] (20,0) circle [radius=.25];
\draw [very thick,fill = white] (20,5) circle [radius=.25];
\draw [very thick,fill = white] (20,10) circle [radius=.25];
\draw [very thick,fill = white] (25,0) circle [radius=.25];
\draw [very thick,fill = white] (25,5) circle [radius=.25];
\draw [very thick,fill = white] (25,10) circle [radius=.25];
\draw [very thick,fill = white] (35,0) circle [radius=.25];
\draw [very thick,fill = white] (35,10) circle [radius=.25];
\draw [very thick,fill = white] (35,5) circle [radius=.25];
\draw [very thick,fill = white] (30,0) circle [radius=.25];
\draw [very thick,fill = white] (30,5) circle [radius=.25];
\draw [very thick,fill = white] (30,10) circle [radius=.25];
\draw [very thick,fill = white] (40,0) circle [radius=.25];
\draw [very thick,fill = white] (40,5) circle [radius=.25];
\draw [very thick,fill = white] (40,10) circle [radius=.25];

\node at (1,11) {\small{\trd{$12$}}};
\node at (6,11) {\small{\tbl{$21$}}};
\node at (11,11) {\small{$6$}};
\node at (16,11) {\small{\trd{$15$}}};
\node at (21,11) {\small{\tbl{$24$}}};
\node at (26,11) {\small{9}};
\node at (31,11) {\small{\trd{$18$}}};
\node at (36,11) {\small{\tbl{$27$}}};
\node at (41,11) {\small{$3$}};

\node at (1,6) {\small{\tbl{$23$}}};
\node at (6,6) {\small{$8$}};
\node at (11,6) {\small{\trd{$17$}}};
\node at (16,6) {\small{\tbl{$26$}}};
\node at (21,6) {\small{$2$}};
\node at (26,6) {\small{\trd{$11$}}};
\node at (31,6) {\small{\tbl{$20$}}};
\node at (36,6) {\small{$5$}};
\node at (41,6) {\small{\trd{$14$}}};

\node at (1,1) {\small{$1$}};
\node at (6,1) {\small{\trd{$10$}}};
\node at (11,1) {\small{\tbl{$19$}}};
\node at (16,1) {\small{$4$}};
\node at (21,1) {\small{\trd{$13$}}};
\node at (26,1) {\small{\tbl{$22$}}};
\node at (31,1) {\small{$7$}};
\node at (36,1) {\small{\trd{$16$}}};
\node at (41,1) {\small{\tbl{$25$}}};

\end{tikzpicture}

\vspace{-.15in}

\end{center}
\caption{A $5$-dispersed labelling of $L_{3,9}$ }
\label{fig3,9}
\end{figure}


\begin{Theorem}
\label{oddm.thm}
$\mathsf{DL}(L_{m,n}) = r(L_{m,n}) = (n+m)/2 - 1$ for all odd $m,n \geq 3$.
\end{Theorem}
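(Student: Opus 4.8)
The upper bound requires nothing new: by Lemma~\ref{gridradius.lem}(1), $r(L_{m,n}) = (m+n)/2 - 1$ when $m$ and $n$ are both odd, and $\mathsf{DL}(L_{m,n}) \le r(L_{m,n})$ by Theorem~\ref{Thm4}. Since the statement is symmetric in $m$ and $n$ (as $L_{m,n} \cong L_{n,m}$) and the case $m = 3$ is exactly Theorem~\ref{m=3.thm}, it remains only to exhibit an $((m+n)/2 - 1)$-dispersed labelling of $L_{m,n}$ when $m \ge 5$ is odd and $n \ge 3$ is odd. I set $s = (m-3)/2 \ge 1$, so that $m = 2s + 3$.

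The plan is to imitate the interleaving of Theorem~\ref{evenm.thm}, but now interleaving $s$ copies of the optimal labelling of $L_{2,n}$ from Theorem~\ref{2nodd.thm} together with a single copy of the optimal labelling of $L_{3,n}$ from Theorem~\ref{m=3.thm}. Write $A, B$ for the two rows of the $L_{2,n}$ labelling and $R_1, R_2, R_3$ (bottom to top, so that label $1$ of $L_{3,n}$ sits at the left end of $R_1$) for the three rows of the $L_{3,n}$ labelling. I would give the label block $\{2(k-1)n + 1, \dots, 2kn\}$ to the $k$-th copy of $L_{2,n}$ (for $1 \le k \le s$) and the block $\{2sn + 1, \dots, mn\}$ to the copy of $L_{3,n}$, each piece carrying its own optimal labelling with the appropriate constant added, and place the rows in $L_{m,n}$ as follows: $R_1$ in row $1$; the ``$A$''-rows of the $s$ copies of $L_{2,n}$ in rows $2, \dots, s+1$; $R_2$ in row $s+2$; the ``$B$''-rows of those copies in rows $s+3, \dots, 2s+2$; and $R_3$ in row $m = 2s+3$, arranged so that the two rows of each copy of $L_{2,n}$ are always $s+1$ apart.

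The verification that this is an $((m+n)/2-1)$-dispersed labelling divides into \emph{within-copy} transitions and the finitely many \emph{between-copy} transitions. For a within-copy transition, the column distance between the two vertices is unchanged by the embedding, while the vertical distance grows: in the $L_{2,n}$ labelling, consecutive labels have opposite parity (the first row consists entirely of even labels, the second entirely of odd labels), so they always lie in different rows, a vertical gap of $1$ that the embedding stretches to $s+1$; in the $L_{3,n}$ labelling, consecutive labels always lie in consecutive rows (each step goes ``one row up''), so the grid vertical gap of $1$ (or $2$, at a wrap-around) is stretched to $s+1$ (respectively $2s+2$), since $R_1, R_2, R_3$ occupy rows $1, s+2, 2s+3$. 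In every case the distance increases by at least $s$ over its value in the original $2$- or $3$-row labelling, hence becomes at least $(n+1)/2 + s = (m+n)/2 - 1$. The between-copy transitions are: from the last vertex of the $k$-th copy of $L_{2,n}$ to the first vertex of the $(k+1)$-st (for $1 \le k \le s-1$), and from the last vertex of the $s$-th copy of $L_{2,n}$ to the vertex labelled $2sn+1$, which is the bottom-left corner where the $L_{3,n}$ labelling begins. Each of these I would check directly from the explicit positions of the relevant vertices; in the arrangement above every such pair lies at vertical distance at least $s$ and horizontal distance at least $(n-1)/2$, so the required bound is met with room to spare. Combining the construction with the upper bound gives $\mathsf{DL}(L_{m,n}) = (m+n)/2 - 1 = r(L_{m,n})$.

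The one genuinely delicate point, I expect, is the placement of the rows. The naive arrangement — all ``$A$''-rows, then $R_1, R_2, R_3$ consecutively, then all ``$B$''-rows — fails, because then the vertical gap between $R_2$ and $R_3$ stays at $1$ and the $L_{3,n}$-copy's dispersal is not improved; one must spread $R_1, R_2, R_3$ across the whole grid while simultaneously keeping the two rows of every $L_{2,n}$-copy far apart, which forces some bookkeeping to confirm that each row index is used exactly once and that the claimed vertical gaps are right. The other point needing care is the structural claim that consecutive labels never share a row: for $L_{2,n}$ this is the parity observation above, and for $L_{3,n}$ it is the ``one row up'' rule, but the modified $L_{3,n}$ labelling used when $3 \mid n$ must be checked as well (there every transition still moves between consecutive rows, or between the top and bottom rows, so the stretching argument still applies).
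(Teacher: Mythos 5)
Your construction is essentially the paper's: both proofs interleave one optimal labelling of $L_{3,n}$ with $(m-3)/2$ copies of the optimal labelling of $L_{2,n}$, spread the rows so that every within-block vertical gap grows by $(m-3)/2$, and then check the handful of between-block transitions. The only structural difference is cosmetic: the paper places the $L_{3,n}$ block first in the label order with its labels reversed ($j \mapsto 3n+1-j$, so that label $3n$ sits at the bottom-left corner), whereas you place it last and unreversed; either choice makes the critical corner transition work.

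One point in your verification needs repair, though. For the between-block transitions you claim ``vertical distance at least $s$ and horizontal distance at least $(n-1)/2$, so the required bound is met with room to spare,'' but $s+(n-1)/2=(m+n)/2-2$, which is one \emph{short} of the required $(m+n)/2-1=s+(n+1)/2$. The transitions do satisfy the bound, but only because of the sharper actual positions, and only if the copies are stacked in increasing order (copy $k$'s $A$-row in row $k+1$): then the last vertex of copy $k$ (rightmost cell of row $k+1$) and the first vertex of copy $k+1$ (column $(n+1)/2$ of row $k+s+3$) are at vertical distance $s+2$, giving $s+2+(n-1)/2$; and the last vertex of copy $s$ and the bottom-left corner of the $L_{3,n}$ block are at horizontal distance $n-1$, giving $s+n-1\ge s+(n+1)/2$ precisely because $n\ge 3$. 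If instead the copies were stacked in decreasing order, the copy-to-copy vertical gap would be exactly $s$ and the labelling would fail by one. So fix the ordering explicitly and record these exact distances rather than the weaker uniform bound; with that done, your argument matches the paper's.
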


\begin{proof}
In view of Theorem \ref{m=3.thm}, we can assume that $m \geq 5$. 
Denote $m = 3 + 2t$; then $t = (m-3)/2$ and $t \geq 1$.

We will use the $(n+1)/2$-dispersed labelling of $L_{2,n}$ given in the proof of Theorem \ref{evenm.thm}. Let $A$ be the top row and let $B$ be the bottom row in this labelling.

We also use a modification of the $(n+1)/2$-dispersed labelling of $L_{3,n}$ constructed in Theorem \ref{m=3.thm}, where we by replace every label $j$ by $3n+1 - j$. The vertex in the bottom left corner originally had the label 1. Therefore, in the modified $(n+1)/2$-dispersed labelling, the vertex in the bottom left corner has the label $3n$.
Denote the rows of the \emph{modified} $(n+1)/2$-dispersed labelling by  $C$ (top row), $D$ (middle row) and $E$ (bottom row).

Then construct the labelling of $L_{m,n}$ having the  $m = 2t+3$ rows indicated in Figure \ref{interleave2.fig}.
In this labelling of $L_{m,n}$, the top row is $E$ and the bottom row is $C$ (that is, the bottom row of the modified labelling of
$L_{3,n}$ is used as the top row of the  labelling of $L_{m,n}$, etc.).

See Figure \ref{fig5,5} for an illustration of the construction when $m=5$ and $n = 5$. We give a bit more detail regarding the top row in Figure \ref{fig5,5}. The bottom row of Figure \ref{fig3,5}, namely, $1 \:\: 4 \:\: 7 \:\: 10 \:\: 13$, is relabelled as
$15 \:\: 12 \:\: 9 \:\: 6 \:\: 3$. This becomes the top row of Figure \ref{fig5,5}.

We are interleaving an $\frac{n+1}{2}$-dispersed labelling of $L_{3,n}$ with $t$ copies of an $\frac{n+1}{2}$-dispersed labelling of $L_{2,n}$.
We need to prove that the distance between consecutively labelled vertices (say $j$ and $j+1$) is at least $(m+n-2)/2$. The proof naturally divides into a number of cases:
\begin{description}
\item[case 1] Vertices $j$ and $j+1$ are both in the same copy of $L_{2,n}$.
\item[case 2] Vertex $j$ is the last (i.e., highest-numbered) vertex in one copy of $L_{2,n}$ and vertex $j+1$ is the first (i.e., lowest-numbered) vertex in the next copy of $L_{2,n}$.
\item[case 3] Vertices $j$ and $j+1$ are both in the $L_{3,n}$.
\item[case 4] Vertex $j$ is the last vertex in the $L_{3,n}$ and vertex $j+1$ is the first vertex in the first copy of $L_{2,n}$ 
(i.e., $j = 3n$).
\end{description}

Case 1 is similar to the analogous case in the proof of Theorem \ref{evenm.thm}. The two rows of one copy of a labelling of $L_{2,n}$ are separated by $t$ additional rows in the labelling of the $L_{m,n}$. It follows that the distance between vertices $j$ and $j+1$ in
$L_{m,n}$ is $t$ larger than the distance in $L_{2,n}$. So the distance is at least 
\[ \frac{n+1}{2} + t = \frac{n+1 + m-3}{2} = \frac{m+n-2}{2}.\]

\smallskip

Case 2 is also similar to the analogous case in the proof of Theorem \ref{evenm.thm}. 

\smallskip

For case 3, the distance between vertices $j$ and $j+1$ in
$L_{m,n}$ is at least $t$ larger than the distance in the labelling of the $L_{3,n}$, since the two relevant rows are separated by at least $t$ new rows.
Therefore, as in case 1, the distance is at least $(m+n-2)/2$.

\smallskip

For case 4, we use the fact that vertex $3n$ is in row $1$ (the row designated as $E$) and column $1$, and vertex $3n+1$ is in row $t+3$ (the row designated as $B+3n$) and column $(n+1)/2$.
So the distance between these two vertices is at least 
\[ \frac{n-1}{2} + t+2 = \frac{n-1 + m-3}{2} + 2 = \frac{m+n}{2}.\]
This completes the proof.
\end{proof}

\begin{figure}

\[
\begin{array}{c|c|}
\cline{2-2}
\text{row 1} & E\\ \cline{2-2}
\text{row 2} &  A+3n \\ \cline{2-2}  
\text{row 3} &  A+5n \\ \cline{2-2}
&  \vdots \\ \cline{2-2}  
\text{row $t+1$} &     A + (2t+1)n\\ \cline{2-2}  
\text{row $t+2$} &  D \\ \cline{2-2}
\text{row $t+3$}  &  B+3n\\ \cline{2-2}  
\text{row $t+4$} & B+5n \\ \cline{2-2}
&  \vdots\\ \cline{2-2}  
\text{row $2t+2$} & B + (2t+1)n \\ \cline{2-2}
\text{row $2t+3$} & C \\ \cline{2-2}
\end{array}
\]


\caption{Interleaved labellings of $L_{2,n}$ and $L_{3,n}$}
\label{interleave2.fig}
\end{figure}

\subsection{Hypercubes}

\begin{figure}
\begin{center}


\begin{tikzpicture}[scale=0.25]

\draw [very thick] (0,0) -- (5,0) -- (10,0) -- (15,0) -- (20,0) ;
\draw [very thick] (0,5) -- (5,5) -- (10,5) -- (15,5) -- (20,5) ;
\draw [very thick] (0,10) -- (5,10) -- (10,10) -- (15,10) -- (20,10);
\draw [very thick] (0,15) -- (5,15) -- (10,15) -- (15,15) -- (20,15) ;
\draw [very thick] (0,20) -- (5,20) -- (10,20) -- (15,20) -- (20,20);

\draw [very thick] (0,0) -- (0,5) -- (0,10) -- (0,15) -- (0,20);
\draw [very thick] (5,0) -- (5,5) -- (5,10) -- (5,15) -- (5,20);
\draw [very thick] (10,0) -- (10,5) -- (10,10) -- (10,15) -- (10,20);
\draw [very thick] (15,0) -- (15,5) -- (15,10) -- (15,15) -- (15,20);
\draw [very thick] (20,0) -- (20,5) -- (20,10) -- (20,15) -- (20,20);

\draw [very thick,fill = white] (0,0) circle [radius=.25];
\draw [very thick,fill = white] (0,5) circle [radius=.25];
\draw [very thick,fill = white] (0,10) circle [radius=.25];
\draw [very thick,fill = white] (0,15) circle [radius=.25];
\draw [very thick,fill = white] (0,20) circle [radius=.25];
\draw [very thick,fill = white] (10,0) circle [radius=.25];
\draw [very thick,fill = white] (10,10) circle [radius=.25];
\draw [very thick,fill = white] (10,15) circle [radius=.25];
\draw [very thick,fill = white] (10,20) circle [radius=.25];
\draw [very thick,fill = white] (10,5) circle [radius=.25];
\draw [very thick,fill = white] (5,0) circle [radius=.25];
\draw [very thick,fill = white] (5,5) circle [radius=.25];
\draw [very thick,fill = white] (5,10) circle [radius=.25];
\draw [very thick,fill = white] (5,15) circle [radius=.25];
\draw [very thick,fill = white] (5,20) circle [radius=.25];
\draw [very thick,fill = white] (15,0) circle [radius=.25];
\draw [very thick,fill = white] (15,10) circle [radius=.25];
\draw [very thick,fill = white] (15,5) circle [radius=.25];
\draw [very thick,fill = white] (15,20) circle [radius=.25];
\draw [very thick,fill = white] (15,15) circle [radius=.25];
\draw [very thick,fill = white] (20,0) circle [radius=.25];
\draw [very thick,fill = white] (20,5) circle [radius=.25];
\draw [very thick,fill = white] (20,10) circle [radius=.25];
\draw [very thick,fill = white] (20,15) circle [radius=.25];
\draw [very thick,fill = white] (20,20) circle [radius=.25];

\node at (1,11) {\small{\trd{$5$}}};
\node at (6,11) {\small{\trd{$2$}}};
\node at (11,11) {\small{\trd{$14$}}};
\node at (16,11) {\small{\trd{$11$}}};
\node at (21,11) {\small{\trd{$8$}}};

\node at (1,21) {\small{\trd{$15$}}};
\node at (6,21) {\small{\trd{$12$}}};
\node at (11,21) {\small{\trd{$9$}}};
\node at (16,21) {\small{\trd{$6$}}};
\node at (21,21) {\small{\trd{$3$}}};

\node at (1,1) {\small{\trd{$10$}}};
\node at (6,1) {\small{\trd{$7$}}};
\node at (11,1) {\small{\trd{$4$}}};
\node at (16,1) {\small{\trd{$1$}}};
\node at (21,1) {\small{\trd{$13$}}};

\node at (1,6) {\small{$22$}};
\node at (6,6) {\small{$24$}};
\node at (11,6) {\small{$16$}};
\node at (16,6) {\small{$18$}};
\node at (21,6) {\small{$20$}};

\node at (1,16) {\small{$17$}};
\node at (6,16) {\small{$19$}};
\node at (11,16) {\small{$21$}};
\node at (16,16) {\small{$23$}};
\node at (21,16) {\small{$25$}};

\end{tikzpicture}

\vspace{-.15in}

\end{center}
\caption{A $4$-dispersed labelling of $L_{5,5}$ }
\label{fig5,5}
\end{figure}
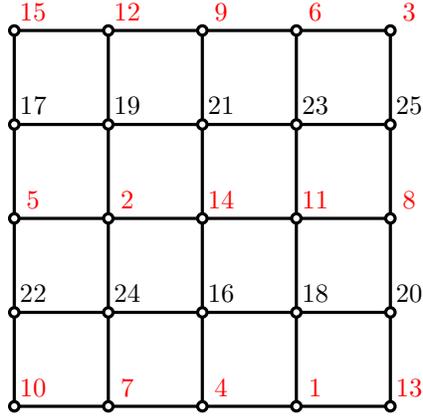

The \emph{$n$-dimensional hypercube}, denoted $\QQ_n$, is a graph having vertex set $V = (\zed_2)^n$.
Two vertices are adjacent in $\QQ_n$ if they differ in exactly one co-ordinate.
For $\vv, \ww \in V$, where
$\ww = (v_1, \dots , v_n)$ and $\ww = (w_1, \dots , w_n)$, it is easy to see that  
\[d(v,w) = | \{ i: v_i \neq w_i \} |.\] 
Here are some other easily verified properties of $\QQ_n$. 

\begin{Lemma}
\label{ncube.lem}
Let $ \vv \in (\zed_2)^n$ and suppose $0 \leq i \leq n$. Then there are precisely $\binom{n}{i}$ vertices $\ww$ such that $d(\vv,\ww) = i$ 
in the graph $\QQ_n$.
\end{Lemma}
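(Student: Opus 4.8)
The statement to prove is Lemma \ref{ncube.lem}: for $\vv \in (\zed_2)^n$ and $0 \le i \le n$, there are exactly $\binom{n}{i}$ vertices $\ww$ with $d(\vv,\ww) = i$ in $\QQ_n$.

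\medskip

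The plan is to invoke the distance formula $d(\vv,\ww) = |\{i : v_i \neq w_i\}|$, which was just established in the excerpt. Under this formula, the condition $d(\vv,\ww) = i$ says precisely that $\vv$ and $\ww$ differ in exactly $i$ of the $n$ coordinates. So first I would fix $\vv$ and observe that a vertex $\ww$ at distance $i$ from $\vv$ is determined by choosing the set $S \subseteq \{1, \dots, n\}$ of coordinates on which $\ww$ differs from $\vv$: once $S$ is chosen, $\ww$ is forced, namely $w_j = v_j$ for $j \notin S$ and $w_j = v_j + 1$ (in $\zed_2$) for $j \in S$.

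\medskip

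The key step is then to check that this gives a bijection between the collection of vertices at distance exactly $i$ from $\vv$ and the collection of $i$-element subsets of $\{1, \dots, n\}$. The map $\ww \mapsto S(\ww) := \{j : w_j \neq v_j\}$ sends each such $\ww$ to an $i$-subset; conversely the map $S \mapsto \ww(S)$ described above sends each $i$-subset to a vertex at distance $i$; and these two maps are mutually inverse, which is immediate from the definitions. Hence the number of such vertices equals the number of $i$-element subsets of an $n$-element set, which is $\binom{n}{i}$.

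\medskip

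There is essentially no obstacle here: the result is a direct consequence of the distance formula together with the definition of the binomial coefficient, and the only thing to be careful about is phrasing the bijection cleanly (equivalently, one could simply say that specifying $\ww$ with $d(\vv,\ww)=i$ amounts to specifying which $i$ coordinates to flip). One could alternatively present it even more tersely by noting that the neighbourhoods-by-distance of $\vv$ partition $V$ and their sizes $\sum_{i=0}^n \binom{n}{i} = 2^n = |V|$ check out, but the bijective argument is the cleanest and most self-contained, so that is the one I would write.
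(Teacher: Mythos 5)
Your proof is correct and is exactly the standard counting argument the authors have in mind; the paper itself states this lemma without proof as an ``easily verified'' property of $\QQ_n$, and your bijection between vertices at distance $i$ and $i$-element subsets of coordinate positions is the canonical way to verify it.
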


\begin{Corollary}
\label{ncube.cor}
For any integer $n \geq 2$, $r(\QQ_n) = n$. Further, every vertex is a uniquely eccentric central vertex. 
\end{Corollary}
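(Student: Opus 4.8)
The plan is to read everything off Lemma \ref{ncube.lem}. Fix an arbitrary vertex $\vv \in (\zed_2)^n$. By Lemma \ref{ncube.lem}, for each $i$ with $0 \le i \le n$ there are exactly $\binom{n}{i}$ vertices $\ww$ with $d(\vv,\ww) = i$, and for $i > n$ there are none (since $d(\vv,\ww) = |\{i : v_i \neq w_i\}| \le n$ always). The largest $i \le n$ for which $\binom{n}{i} > 0$ is $i = n$, and $\binom{n}{n} = 1$. Hence $\epsilon(\vv) = \max\{ d(\vv,\ww) : \ww \in V \} = n$.

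Since this holds for every vertex $\vv$, we get $r(\QQ_n) = \min\{\epsilon(\vv) : \vv \in V\} = n$, which is the first assertion. For the second assertion, every vertex $\vv$ satisfies $\epsilon(\vv) = n = r(\QQ_n)$, so every vertex is a central vertex. Moreover, because $\binom{n}{n} = 1$, there is exactly one vertex at distance $n = \epsilon(\vv)$ from $\vv$ — concretely, the bitwise complement $\bar\vv = (1 - v_1, \dots, 1 - v_n)$ — so $\vv$ is uniquely eccentric.

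I expect no real obstacle here: the only thing to be careful about is noting explicitly that distances in $\QQ_n$ cannot exceed $n$, so that $\binom{n}{n}=1$ genuinely pins down the eccentricity, and that the single vertex achieving it is the complement (this is the content needed for ``uniquely eccentric''). The hypothesis $n \ge 2$ plays no role in the argument itself and is presumably stated only for consistency with later use.
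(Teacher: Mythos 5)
Your proof is correct and matches the paper's intent exactly: the paper states this corollary as an easily verified consequence of Lemma \ref{ncube.lem} without writing out a proof, and your derivation (every vertex has eccentricity $n$ since $\binom{n}{n}=1>0$ and distances cannot exceed $n$, so all vertices are central and each has a unique antipode, namely its bitwise complement) is precisely the intended argument. Your side remark that the hypothesis $n\ge 2$ is not actually needed here is also accurate.
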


Lemma \ref{three.lem} and Corollary \ref{ncube.cor} immediately imply that $\DL(\QQ_n) \leq n-1$ for all $n \geq 2$.
We will prove that $\DL (\QQ_n) \geq n-1$ by constructing a suitable labelling of the vertices of $\QQ_n$.
It suffices to find a permutation of the $n$-tuples in $V$, say $\vv_1, \dots , \vv_{2^n}$, such that 
$d (\vv_j, \vv_{j+1}) \geq n-1$ for $1 \leq j \leq 2^n-1$. In the resulting dispersed labelling, vertex $\vv_j$ is labelled $j$, for $1 \leq j \leq 2^n$.

We construct the permutations recursively. In order for the recursive construction to work, we require some additional properties. Hence, for each $n \geq 2$, we will construct a particular permutation $\Pi_n = (\vv_1, \dots , \vv_{2^n})$ satisfying the following  properties.
\begin{enumerate}
\item $d (\vv_j, \vv_{j+1}) \geq n-1$ for $1 \leq j \leq 2^n-1$,
\item $\vv_1 = (0, \dots , 0)$,
\item $\vv_{2^{n-1}} = (1, \dots , 1, 0)$,
\item $\vv_{2^{n-1}+1} = (0, \dots , 0, 1)$,
\item $\vv_{2^n} = (1, \dots , 1)$.
\end{enumerate}

\begin{Construction}
\label{perm.const}
{\rm 
We describe how to construct the permutations $\Pi_n$ that satisfy the five properties enumerated above. To begin, when $n = 2$, $\Pi_2$ is the following permutation:
\[
(0, 0), \: (1,  0), \: (0,  1), \: (1, 1).
\]
Now we construct $\Pi_{n+1}$ from $\Pi_n$. For notational convenience, we denote 
$\Pi_n = (\vv_1, \dots , \vv_{2^n})$ and $\Pi_{n+1} = (\ww_1, \dots , \ww_{2^{n+1}})$.
\begin{itemize}
\item For $1 \leq j \leq 2^{n-1}$, define 
\[\ww_j = 
\begin{cases} (\vv_j, 0) & \text{ if $j$ is odd}\\
 (\vv_j, 1) & \text{if $j$ is even}.
 \end{cases}
 \]
\item For $2^{n-1}+1 \leq j \leq 2^n$, define 
\[\ww_j = 
\begin{cases} (\vv_j, 1) & \text{ if $j$ is odd}\\
 (\vv_j, 0) & \text{if $j$ is even}.
 \end{cases}
 \]
\item For $2^n + 1 \leq j \leq 2^n + 2^{n-1}$, define 
\[\ww_j = 
\begin{cases} (\vv_{j-2^n}, 1) & \text{ if $j$ is odd}\\
 (\vv_{j-2^n}, 0) & \text{if $j$ is even}.
 \end{cases}
 \]
\item For $2^n + 2^{n-1} +1 \leq j \leq 2^{n+1}$, define 
\[\ww_j = 
\begin{cases} (\vv_{j-2^n}, 0) & \text{ if $j$ is odd}\\
 (\vv_{j-2^n}, 1) & \text{if $j$ is even}.
 \end{cases}
 \]
\end{itemize}
}
\end{Construction}

\begin{Example}
{\rm $\Pi_3$ is the following permutation:
\[
(0, 0, 0), \: (1,  0, 1), \: (0,  1, 1), \: (1, 1, 0), \: (0, 0, 1), \: (1,  0, 0), \: (0,  1, 0), \: (1, 1, 1).
\]
}\end{Example}

\begin{Theorem}
\label{ncube.thm}
For all $n \geq 2$, $\DL(\QQ_n) = n-1$.
\end{Theorem}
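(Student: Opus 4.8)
The plan is to prove the two bounds $\DL(\QQ_n) \le n-1$ and $\DL(\QQ_n) \ge n-1$ separately. The upper bound is immediate from what precedes: by Corollary~\ref{ncube.cor}, every vertex of $\QQ_n$ is a uniquely eccentric central vertex, so choosing any three of them and applying Lemma~\ref{three.lem} gives $\DL(\QQ_n) \le r(\QQ_n) - 1 = n-1$. The work therefore lies in the matching lower bound, for which I would verify that the sequence $\Pi_n = (\vv_1, \dots, \vv_{2^n})$ produced by Construction~\ref{perm.const} satisfies the five enumerated properties; property~1 then states exactly that $j \mapsto \vv_j$ is an $(n-1)$-dispersed labelling of $\QQ_n$, giving $\DL(\QQ_n)\ge n-1$.

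I would argue by induction on $n$. The base case $n=2$ is a direct check of $\Pi_2 = ((0,0),(1,0),(0,1),(1,1))$ against all five properties. For the inductive step, assume $\Pi_n = (\vv_1,\dots,\vv_{2^n})$ has properties 1--5 and let $\Pi_{n+1} = (\ww_1,\dots,\ww_{2^{n+1}})$ be as in the construction. Properties 2--5 for $\Pi_{n+1}$ drop out of the construction combined with properties 2--5 for $\Pi_n$ and the fact that $2^n$ is even: $\ww_1 = (\vv_1,0) = (0,\dots,0)$; $\ww_{2^n} = (\vv_{2^n},0) = (1,\dots,1,0)$; $\ww_{2^n+1} = (\vv_1,1) = (0,\dots,0,1)$; and $\ww_{2^{n+1}} = (\vv_{2^n},1) = (1,\dots,1)$.

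The substantive point is property~1 for $\Pi_{n+1}$, namely $d(\ww_j,\ww_{j+1}) \ge n$ for $1 \le j \le 2^{n+1}-1$. I would split these transitions into two types. For a \emph{generic} transition, where $j$ and $j+1$ lie in the same one of the four index blocks of Construction~\ref{perm.const}, the construction produces $\ww_j = (\vv_a, \epsilon)$ and $\ww_{j+1} = (\vv_{a+1}, 1-\epsilon)$ for consecutive indices $a,a+1$ of $\Pi_n$ (the last coordinate flips because $j$ and $j+1$ have opposite parity while the block offsets $2^n$ and $2^{n-1}$ are even), so by property~1 for $\Pi_n$ we get $d(\ww_j,\ww_{j+1}) = d(\vv_a,\vv_{a+1}) + 1 \ge (n-1)+1 = n$. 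The remaining transitions are the three \emph{seams} at $j = 2^{n-1}$, $j = 2^n$, and $j = 2^n + 2^{n-1}$. In the first and third seams the two $\Pi_n$-indices involved are $2^{n-1}$ and $2^{n-1}+1$; properties 3 and 4 for $\Pi_n$ identify the corresponding vertices as $(1,\dots,1,0)$ and $(0,\dots,0,1)$, which differ in all $n$ coordinates, so $d(\ww_j,\ww_{j+1}) \ge n$ regardless of what the last coordinate does. In the middle seam the two $\Pi_n$-indices are $2^n$ and $1$, and properties 5 and 2 identify the vertices as $(1,\dots,1)$ and $(0,\dots,0)$, again differing in all $n$ coordinates. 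This exhausts all cases.

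The main obstacle is purely organizational: one must track, for each of the four ranges in Construction~\ref{perm.const} and each block boundary, which parity branch of the case split applies and which of properties 2--5 of $\Pi_n$ pins down the relevant vertices — and, in particular, confirm that every seam lands on a pair of $\QQ_n$-vertices disagreeing in all $n$ coordinates. Once the generic transitions are dispatched uniformly and only three seams remain to be checked by hand, no inequality is harder than ``$(n-1)+1 \ge n$'', so the remainder is routine.
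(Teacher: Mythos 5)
Your proposal is correct and follows essentially the same route as the paper: the upper bound via Corollary~\ref{ncube.cor} and Lemma~\ref{three.lem}, and the lower bound by induction on the five properties of $\Pi_n$, with the same split into within-block transitions (where the appended bit flips, adding $1$ to a distance that is at least $n-1$) and the three seams at $j = 2^{n-1}, 2^n, 2^n+2^{n-1}$ (where properties 2--5 force the underlying $\vv$'s to be at distance exactly $n$). The one point you leave implicit that the paper states explicitly is that $\Pi_{n+1}$ is actually a permutation of $\{0,1\}^{n+1}$ --- each $\vv_a$ gets appended with a $0$ once and a $1$ once across the two halves of Construction~\ref{perm.const} --- which is needed for the sequence to define a labelling at all; this is a one-line check, not a flaw in the approach.
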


\begin{proof}
We have already observed that $\DL(\QQ_n) \leq n-1$, so it suffices to prove that $\DL(\QQ_n) \geq n-1$. This follows by showing that each of the 
permutations $\Pi_n$ obtained from Construction \ref{perm.const} satisfies the properties 1--5 enumerated above, which we prove by induction on $n$.

First, we observe that each $\Pi_n$ is a permutation. 
This is because every $n$-tuple in $\Pi_n$ is appended with both a $0$ and a $1$ in the construction of $\Pi_{n+1}$. 

For $n=2$, the stated properties can be verified easily. Now, suppose that properties 1--5 hold for a particular $\Pi_n$, where $n \geq  2$.
We will show that properties 1--5 hold for $\Pi_{n+1}$. We use the notation from Construction \ref{perm.const}.

When $j \neq 2^n$, $\ww_j$ and $\ww_{j+1}$ are obtained from two consecutive $\vv_j$'s, which are assumed to have distance at least $n-1$, by induction.  In most cases, one of $\ww_j$ and $\ww_{j+1}$ is appended with a $1$ and the other is appended with a $0$. 
In these cases, $d(\ww_j, \ww_{j+1}) \geq n$.

The only cases where $\ww_j$ and $\ww_{j+1}$ are appended with the same symbol are when $j = 2^{n-1}, 2^n$ or $2^n + 2^{n-1}$. 
When $j = 2^{n-1}$ or $2^n + 2^{n-1}$, properties 3 and 4 ensure that $\ww_j$ and $\ww_{j+1}$ are obtained from two consecutive $\vv_j$'s that have distance $n$.  When $j = 2^n$,  $\ww_j$ and $\ww_{j+1}$ are obtained from $\vv_{2^n}$ and $\vv_1$ (resp.),  which have distance $n$ due to properties 2 and 5. Hence, property 1 is satisfied for $\Pi_{n+1}$. 

It is easy to verify that properties 2--5 hold for $\Pi_{n+1}$, so the proof is complete.
\end{proof} 

We note that there is an alternative approach that gives an easier proof of Theorem \ref{ncube.thm} in the case where $n$ is even.
It is well-known
that, for $n$ even, the distance $n-1$ graph of $\QQ_n$ is isomorphic to  $\QQ_n$ (see, e.g., \cite[p.\ 265, Remark (i)]{BCN}). One such isomorphism is obtained by flipping the bits of 
all odd-weight $n$-tuples.  It is also well-known that  $\QQ_n$ has a hamiltonian cycle; the \emph{binary reflected gray code} is one example (see, e.g., \cite{graycode}). Hence it immediately follows that $\DL^\circ(Q_n)=\DL(\QQ_n)= n-1$ if $n$ is even. Note that this observation also yields the values of $\DL^\circ(Q_n)$ for even values of $n$. 

The distance $n-1$ graph of 
$\QQ_n$ is disconnected if $n$ is odd, so this particular approach will not work.

\subsection{Complete Binary Trees}
\label{cbt.sec}

The radius of the complete binary tree of depth $n$ is $n$. We will show that the complete binary tree of depth $n$ admits an $n$-dispersed labelling.

Let $T$ have vertex set $\{\varepsilon\} \cup \{0,1\} \cup \cdots \cup \{0,1\}^n$ where $\varepsilon$ denotes the empty string. We have an edge joining $\bx$ to $\by$ if 
one can be obtained from the other by deleting the rightmost bit. Let $|\bx|$ denote the length of the binary string $\bx$. 
A 3-dispersed labelling of a complete binary tree of depth three is given in Figure \ref{Fig:bintree3}.

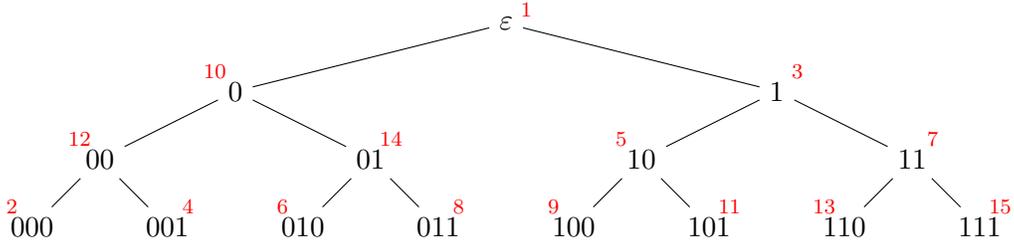
\begin{figure}[tb]
\begin{center}  
\begin{tikzpicture}[baseline=(000),node distance=0.5cm,xscale=0.9,yscale=0.9,    %
 solidvert/.style={draw,fill=red, circle, inner sep=1.5pt},
 hollowvert/.style={draw,circle, fill=white, inner sep=1.5pt},
 every loop/.style={min distance=10pt,in=-135,out=-45,looseness=10}]
 \def \n {3}
 \node (eps) at (0,\n) {$\varepsilon$}; 
\node (0) at (-4,\n-1) {$0$}; 
\node (1) at (4,\n-1) {$1$}; 
\node (00) at (-6,\n-2) {$00$}; 
\node (01) at (-2,\n-2) {$01$}; 
\node (10) at ( 2,\n-2) {$10$}; 
\node (11) at ( 6,\n-2) {$11$}; 
\node (000) at (-7,\n-3) {$000$}; 
\node (001) at (-5,\n-3) {$001$}; 
\node (010) at (-3,\n-3) {$010$}; 
\node (011) at ( -1,\n-3) {$011$}; 
\node (100) at ( 1,\n-3) {$100$}; 
\node (101) at ( 3,\n-3) {$101$}; 
\node (110) at ( 5,\n-3) {$110$}; 
\node (111) at (  7,\n-3) {$111$}; 
  \draw (eps) -- (0);   \draw (eps) -- (1); 
 \draw (0) -- (00);   \draw (0) -- (01);   \draw (1) -- (10);   \draw (1) -- (11); 
  \draw (00) -- (000);   \draw (00) -- (001);   \draw (01) -- (010);   \draw (01) -- (011);    \draw (10) -- (100);   \draw (10) -- (101);   \draw (11) -- (110);   \draw (11) -- (111); 
 \node[red] at (0.3,\n+0.2) {$\scriptstyle{1}$};
\node[red] at (-4.3,\n-1+0.3) {$\scriptstyle{10}$};
\node[red] at (4.3,\n-1+0.3) {$\scriptstyle{3}$};
\node[red] at (-6.3,\n-2+0.3) {$\scriptstyle{12}$};
\node[red] at (-1.7,\n-2+0.3) {$\scriptstyle{14}$};
\node[red] at (1.7,\n-2+0.3) {$\scriptstyle{5}$};
\node[red] at (6.3,\n-2+0.3) {$\scriptstyle{7}$};
\node[red] at (-7.3,\n-3+0.3) {$\scriptstyle{2}$};
\node[red] at (-4.7,\n-3+0.3) {$\scriptstyle{4}$};
\node[red] at (-3.3,\n-3+0.3) {$\scriptstyle{6}$};
\node[red] at (-0.7,\n-3+0.3) {$\scriptstyle{8}$};
\node[red] at (0.7,\n-3+0.3) {$\scriptstyle{9}$};
\node[red] at (3.3,\n-3+0.3) {$\scriptstyle{11}$};
\node[red] at (4.7,\n-3+0.3) {$\scriptstyle{13}$};
\node[red] at (7.3,\n-3+0.3) {$\scriptstyle{15}$};
 \end{tikzpicture}
 \caption{A $3$-dispersed labelling of the complete binary tree of depth three.
\label{Fig:bintree3}}
 \end{center}
 \end{figure}
 
 Our labelling is as follows. The root node, $\varepsilon$, receives label $1$. The ``left'' nodes---those vertices whose strings begin with zero---receive
 even labels, with the rule that labels $2,4,6,\ldots, 2^{n}$ are used on leaf nodes and the remaining even labels occur on nodes of degree three. The ``right'' nodes---those vertices whose strings begin with one---receive
 odd labels, with the rule that labels $2^n+1,2^n+3,\ldots, 2^{n+1}-1$ are used for leaf nodes and the remaining odd labels $\ell \ge 3$ occur on nodes of degree three.  
 
 A concrete rule for such a labelling is given as follows, where $N(\bx)$ denotes the integer whose binary representation is $\bx$. 
 We set 
 $$ \ell(\bx) = \begin{cases} 2N(\bx)  + 1& \mbox{if} \ x_1=1; \\
  2N(\bx) + 2 & \mbox{if} \ x_1=0 \ \mbox{and} \ \bx \ \mbox{is a leaf node;} \\
  2N(\bx) +  2^n+2^{|\bx|} & \mbox{if} \ x_1=0 \ \mbox{and} \ \bx \ \mbox{is not a leaf node.}
 \end{cases}$$
 One may verify by inspection that each label $0<i<2^{n+1}$ is used once. Among any consecutive pair of labels $i,i+1$, one of these appears
 at a leaf node and the path from this node to the other one passes through the root of the tree and therefore this path has length at least $n$.
 
 Therefore we have proven the following result.
 
 \begin{Theorem}
 \label{cbt.thm}
 Let $T_n$ denote the complete binary tree of depth $n$. Then $\DL(T_n)= n$.
 \end{Theorem}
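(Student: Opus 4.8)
The plan is to establish both inequalities $\DL(T_n) \le n$ and $\DL(T_n) \ge n$. The upper bound is immediate from the machinery already developed: the complete binary tree $T_n$ has radius $n$ (the root is the unique central vertex, or at least the center is where the eccentricity is minimized), so Theorem \ref{Thm4} gives $\DL(T_n) \le r(T_n) = n$. Note that Lemma \ref{three.lem} does not apply here since $T_n$ has only one central vertex (the root), so there is no easy way to push the upper bound down to $n-1$ — and indeed the point of the theorem is that $n$ is attained.

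For the lower bound, I would exhibit an explicit $n$-dispersed labelling, which is exactly what the discussion preceding the theorem statement does. The key structural observation is that any two vertices $\bx, \by$ in the tree with $\bx$ in the left subtree (string starting with $0$) and $\by$ in the right subtree (string starting with $1$) satisfy $d(\bx,\by) = |\bx| + |\by|$, since the unique path between them passes through the root $\varepsilon$. In particular, if one of them is a leaf (length $n$) and the other has length $\ge 0$, the distance is at least $n$; and the root itself is at distance $|\by|$ from any vertex $\by$, which is at least... hmm, that's a problem when $\by$ is near the top. So the labelling must be arranged so that the \emph{only} consecutive pairs $i, i+1$ are pairs where one vertex is a leaf and the other is in the opposite subtree (or is any vertex, as long as distances work out).

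Concretely, I would verify the proposed rule $\ell(\bx)$: root gets $1$; vertices with $x_1 = 1$ get odd labels, with leaves getting $2^n+1, 2^n+3, \dots, 2^{n+1}-1$ and internal right-nodes getting the smaller odd labels $3, 5, \dots, 2^n - 1$; vertices with $x_1 = 0$ get even labels, with leaves getting $2, 4, \dots, 2^n$ and internal left-nodes getting $2^n+2, \dots, 2^{n+1}-2$. The two things to check are (a) this is a bijection onto $\{1, \dots, 2^{n+1}-1\}$ — a routine counting argument, since there are $2^n$ left-leaves, $2^n$ right-leaves, $2^{n-1}-1$ internal left-nodes, $2^{n-1}-1$ internal right-nodes, and one root, summing to $2^{n+1}-1$, and the four arithmetic progressions partition the appropriate residue classes; and (b) for each $i$, the vertices labelled $i$ and $i+1$ are at distance $\ge n$. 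For (b), the crucial case analysis is: label $1$ (root) is adjacent in the sequence to label $2$, which is a left-leaf, at distance $n$; consecutive small odd/even labels pair an internal right-node with a right-leaf or cross between the blocks; and in every case one checks that among $\{i, i+1\}$ at least one is a leaf and the other lies in the opposite half of the tree (or is the root, when $i=1$), so the connecting path runs through $\varepsilon$ and has length $\ge n$.

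The main obstacle is getting the bookkeeping in (b) exactly right — specifically, confirming that consecutive integers never both land in the same subtree at shallow depth (which would give a short path) and that the ``seam'' between the block of even labels and the block of odd labels, and the transition around the root, behave correctly. This is what the sentence ``Among any consecutive pair of labels $i, i+1$, one of these appears at a leaf node and the path from this node to the other one passes through the root'' asserts, and I would expand it into a short explicit case check based on the parity of $i$ and which of the four arithmetic progressions $i$ and $i+1$ fall into. Everything else — the upper bound, the bijectivity count — is essentially immediate.
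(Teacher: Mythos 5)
Your proposal follows essentially the same route as the paper: the upper bound via $\DL(T_n)\le r(T_n)=n$ (Theorem~\ref{Thm4}), and the lower bound via the very labelling the paper constructs, with the same key observation that in every consecutive pair $\{i,i+1\}$ one vertex is a leaf and the other lies in the opposite subtree (or is the root), so the connecting path passes through $\varepsilon$ and has length at least $n$. The only flaw is a harmless miscount in your bijectivity check: each subtree has $2^{n-1}$ leaves, not $2^n$ (with $2^{n-1}$ per side, the four arithmetic progressions plus the root do sum to $2^{n+1}-1$, consistent with the label ranges you assign).
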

 
 \begin{proof}
 The labelling described above shows that $\DL(T_n) \geq n$. Since $T_n$ has radius $n$, it follows immediately that $\DL(T_n)= n$.
 \end{proof}
 
\section{A product construction}
\label{sec3}

The \emph{Cartesian product} of  graphs $G = (V(G), E(G))$ and $H = (V(H),E(H))$, denoted $G\,\Box \,H$, is the graph with 
vertex set $V(G)\times V(H)$ and edge set 
$$E(G \, \Box \, H) = \left\{ \{ (u,v), (u',v') \} : \  u=u', \ \{v,v'\} \in E(H)  \ \mbox{or} \ \{u,u'\}\in E(G),  v =v' \ \right\}. $$

Suppose $G$ is a graph on $m$ vertices having a $k$-circular-dispersed labelling and 
$H$ is a graph on $n$ vertices having a  $k'$-circular-dispersed labelling. When $\gcd \left( m,n \right) =1$, we will show how  to combine these two labellings to give a $(k+k')$-circular-dispersed labelling of 
the product graph $G \,\Box \,H$.



\begin{Theorem}
\label{product.thm}
Let $G$ and $H$ be graphs with  $|V(G)|=m$, $|V(H)| =n$ and $\gcd \left( m,n \right) =1$. Then $\DL^\circ( G\, \Box \, H) \ge \DL^\circ(G)+\DL^\circ(H)$.
\end{Theorem}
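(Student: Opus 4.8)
The plan is to use the Chinese Remainder Theorem to ``diagonally'' interleave a circular-dispersed labelling of $G$ with one of $H$. Write $k = \DL^\circ(G)$ and $k' = \DL^\circ(H)$, and fix a $k$-circular-dispersed labelling of $G$, viewed (as in the alternative definition given in the introduction) as a bijection $\phi : \zed_m \to V(G)$ with $d_G(\phi(i),\phi(i+1)) \ge k$ for every $i \in \zed_m$, and similarly a bijection $\psi : \zed_n \to V(H)$ with $d_H(\psi(j),\psi(j+1)) \ge k'$ for every $j \in \zed_n$. The first ingredient I would record is the standard additivity of distance in a Cartesian product: for $(u,v),(u',v') \in V(G) \times V(H)$,
\[ d_{G \, \Box \, H}\bigl((u,v),(u',v')\bigr) = d_G(u,u') + d_H(v,v'). \]
This holds because any walk in $G \, \Box \, H$ projects to walks in $G$ and $H$ whose lengths sum to the length of the original, while conversely geodesics in the two factors can be concatenated.

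Next I would use the hypothesis $\gcd(m,n) = 1$: the map $t \mapsto (t \bmod m,\, t \bmod n)$ is then a bijection $\zed_{mn} \to \zed_m \times \zed_n$. Define $\chi : \zed_{mn} \to V(G) \times V(H)$ by
\[ \chi(t) = \bigl(\phi(t \bmod m),\ \psi(t \bmod n)\bigr). \]
Since $\phi$, $\psi$, and the CRT map are all bijections, $\chi$ is a bijection, i.e., a labelling of $V(G \, \Box \, H)$ by $\zed_{mn}$. To see that it is $(k+k')$-circular-dispersed, note that for any $t \in \zed_{mn}$ the reductions satisfy $(t+1) \bmod m = (t \bmod m) + 1$ in $\zed_m$ and $(t+1) \bmod n = (t \bmod n) + 1$ in $\zed_n$; hence by the dispersal properties of $\phi$ and $\psi$ together with the displayed distance formula,
\[ d_{G \, \Box \, H}\bigl(\chi(t),\chi(t+1)\bigr) = d_G\bigl(\phi(t \bmod m),\phi((t{+}1)\bmod m)\bigr) + d_H\bigl(\psi(t\bmod n),\psi((t{+}1)\bmod n)\bigr) \ge k + k'. \]
This holds for every $t \in \zed_{mn}$, including $t = mn-1$, where $t+1 = 0$ and the two reductions still differ by $1$; so $\chi$ witnesses $\DL^\circ(G \, \Box \, H) \ge k + k'$.

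I do not expect a serious obstacle here: the coprimality hypothesis is used in exactly one place, to guarantee via CRT that the diagonal map $t \mapsto (t \bmod m, t \bmod n)$ is a bijection. The only things to be careful about are the two ``base facts'' — the additivity of distance in $G \, \Box \, H$ and the compatibility of ``add $1$'' with reduction mod $m$ and mod $n$ — and the observation that if $\gcd(m,n) = g > 1$ then the diagonal hits only a $g$-fraction of $\zed_m \times \zed_n$, so this particular construction genuinely requires the hypothesis. If a standalone ``product construction'' is wanted, I would state it as: a $k$-circular-dispersed labelling of $G$ and a $k'$-circular-dispersed labelling of $H$ with $\gcd(|V(G)|,|V(H)|) = 1$ combine, via the formula for $\chi$ above, into a $(k+k')$-circular-dispersed labelling of $G \, \Box \, H$; Theorem \ref{product.thm} is then the immediate corollary obtained by starting from optimal labellings.
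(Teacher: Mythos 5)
Your proof is correct and is essentially identical to the paper's: both define the labelling $t \mapsto (\phi(t \bmod m), \psi(t \bmod n))$, invoke the Chinese Remainder Theorem (via $\gcd(m,n)=1$) for bijectivity, and use additivity of distance in the Cartesian product to get the bound $k+k'$. No gaps.
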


\begin{proof}
Denote $k = \DL^\circ(G)$ and $k' = \DL^\circ(H)$.  Suppose $\ell_G:    \zed_m \rightarrow V(G)$ is a $k$-circular-dispersed labelling of $G$ and $\ell_H:   \zed_n  \rightarrow V(H)$ is a $k'$-circular-dispersed labelling of $H$. 
For $0 \leq i \leq mn-1$, define
\[  \ell(i) = ( \ell_G(i \bmod m),\ell_H(i \bmod n) ).\]
  Since $\gcd \left( m,n \right) =1$, it follows immediately that 
  $\ell$ is a bijection from $\zed_{mn} $ to the vertices of  $G\,\Box \,H$.
The distance between two vertices in $G\,\Box \,H$ is just the sum of the distances between the corresponding projections (i.e., to the first and second coordinates) in $G$ and $H$.
Hence, for any $i$, where $0 \leq i \leq mn-1$, The distance between $\ell(i)$ and $\ell(i+1 \bmod mn)$ in $G\,\Box \,H$ is the sum 
\[ d_G(i \bmod m,(i+1) \bmod m) + d_H(i \bmod n,(i+1) \bmod n) \geq  k + k',\] as desired.
\end{proof}

We illustrate the product construction by returning to grids. Our goal is not to compute all the values $\mathsf{DL}^{\circ}(L_{m,n})$, but rather to illustrate how Theorem \ref{product.thm} can be applied to a specific family of graphs.

Recall that $L_{m,n}$ is an $m$ by $n$ grid and $P_{m-1}$ is a path having $m$ vertices.
It is easy to see that $P_{m-1} \, \Box \, P_{n-1}$ is isomorphic to $L_{m,n}$. We can obtain lower bounds on $\mathsf{DL}^{\circ}(L_{m,n})$
if we have lower bounds on $\mathsf{DL}^{\circ}(P_{m-1})$ and $\mathsf{DL}^{\circ}(P_{n-1})$. Therefore, we first look at circular-dispersed labellings of paths. 

%

\begin{Lemma}
\label{two.lem}
If $\mathsf{DL}^{\circ}(G) = r(G)$, then $G$ has no uniquely eccentric central vertices.
\end{Lemma}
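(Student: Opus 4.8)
The statement is the contrapositive of a natural claim: if $G$ has a uniquely eccentric central vertex, then $\mathsf{DL}^{\circ}(G) \le r(G)-1$. So the plan is to argue by contradiction. Suppose $v$ is a uniquely eccentric central vertex, so $\epsilon(v) = r(G)$ and there is a unique vertex $u$ with $d(u,v) = r(G)$; in particular, every other vertex $w \neq u$ satisfies $d(w,v) \le r(G)-1$. Suppose also, for contradiction, that $\mathsf{DL}^{\circ}(G) = r(G)$, and fix a $k$-circular-dispersed labelling with $k = r(G)$, viewed as a bijection $\phi : \zed_n \to V$.

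The key observation is that in a circular-dispersed labelling every vertex has \emph{two} neighbours in the cyclic order: the vertex $v = \phi(i)$ is adjacent (in the labelling sequence) to both $\phi(i-1)$ and $\phi(i+1)$, and both of these are required to be at distance $\ge r(G)$ from $v$. But the only vertex at distance $\ge r(G)$ from $v$ is $u$, and $\phi(i-1) \neq \phi(i+1)$ since $\phi$ is a bijection and $n \ge 2$ (here one should note $n \ge 3$ so that these two indices are genuinely distinct from each other and from $i$; for $n \le 2$ the claim is either vacuous or trivial and can be dispatched separately, or one observes a connected graph with a uniquely eccentric vertex has $n \ge 2$ and handles $n=2$ by hand). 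Thus we would need two distinct vertices both equal to $u$ — a contradiction. Hence $\mathsf{DL}^{\circ}(G) \le r(G)-1$, which is exactly the contrapositive of the statement.

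The steps, in order: (1) unpack the definitions of uniquely eccentric, central, and $k$-circular-dispersed labelling; (2) record that $u$ is the \emph{only} vertex at distance $\ge r(G)$ from $v$; (3) locate $v$ in the cyclic labelling as $\phi(i)$ and apply the circular-dispersion condition to both adjacent positions $i-1$ and $i+1$; (4) conclude both $\phi(i-1)$ and $\phi(i+1)$ equal $u$, contradicting injectivity of $\phi$; (5) handle the degenerate small-$n$ case if one wants to be scrupulous. I do not anticipate a real obstacle here — the only subtlety is making sure the two ``cyclic neighbours'' of $v$ are genuinely distinct, which is why the circular version works and why Lemma \ref{three.lem} (the non-circular analogue) instead needed \emph{three} uniquely eccentric central vertices to force a position with two well-defined neighbours in the path order.
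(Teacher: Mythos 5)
Your proof is correct and follows essentially the same route as the paper's: locate the uniquely eccentric central vertex at position $i$ in the cyclic labelling, note that its two distinct cyclic neighbours $\phi(i-1)$ and $\phi(i+1)$ cannot both be the unique vertex at distance $r(G)$, and conclude $\mathsf{DL}^{\circ}(G) < r(G)$. The only difference is cosmetic (contrapositive/contradiction phrasing and the explicit small-$n$ caveat, which the paper omits).
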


\begin{proof}
Let $n=|V|$ and suppose $\ell : \zed_n \rightarrow V$  is a $k$-circular-dispersed labelling with $k$ as large as possible. Let $x$ be a uniquely eccentric central vertex and suppose $\ell(i)=x$. Then at least one of $d( \ell(i-1),x)$, $d(x,\ell(i+1))$ is smaller than $r(G)$. So $k< r(G)$.
\end{proof}

\begin{Lemma}
\label{circpath.lem}
Let $P_m$ be the path with $m$ edges (and $m+1$ vertices). If $m$ is even, then $\DL^\circ(P_m)=\DL(P_m)= \frac{m}{2}$. If $m \geq 3$ is odd, then
 $\DL^\circ(P_m)=\DL(P_m)-1= \frac{m-1}{2}$. 
\end{Lemma}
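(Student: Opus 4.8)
By Lemma~\ref{circ.lem} we have $\DL^\circ(P_m) \le \DL(P_m)$, and Theorem~\ref{paths.thm} gives $\DL(P_m) = r(P_m)$, which is $\frac{m}{2}$ when $m$ is even and $\frac{m+1}{2}$ when $m$ is odd. So for even $m$ it remains only to exhibit an $\frac{m}{2}$-circular-dispersed labelling, while for odd $m$ we must establish both the upper bound $\DL^\circ(P_m) \le \frac{m-1}{2}$ and a matching $\frac{m-1}{2}$-circular-dispersed labelling. Throughout, label the vertices of $P_m$ as $v_0, v_1, \dots, v_m$ in path order, so that $d(v_a, v_b) = |a-b|$.

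For even $m$, I claim the labelling displayed in the proof of Theorem~\ref{paths.thm} is in fact already circular-dispersed. In that labelling the vertex carrying label $m+1$ is $v_m$ and the vertex carrying label $1$ is $v_{m/2}$, so the one extra distance that must be checked for the circular property is $d(v_m, v_{m/2}) = \frac{m}{2}$, which is exactly the threshold; all remaining consecutive-label distances coincide with those of the $\frac{m}{2}$-dispersed labelling of Theorem~\ref{paths.thm} and are already $\ge \frac{m}{2}$. Hence $\DL^\circ(P_m) \ge \frac{m}{2}$, and equality follows.

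For odd $m$, write $m = 2p-1$, so $P_m$ has $2p$ vertices and $r(P_m) = p = \frac{m+1}{2}$. The eccentricity of $v_i$ equals $\max\{i,\, 2p-1-i\}$, which attains its minimum value $p$ exactly at $i = p-1$ and $i = p$; moreover $\epsilon(v_{p-1}) = p$ is realized only by $v_{2p-1}$, and $\epsilon(v_p) = p$ only by $v_0$. Thus both central vertices of $P_m$ are uniquely eccentric, so by the contrapositive of Lemma~\ref{two.lem} we have $\DL^\circ(P_m) \ne r(P_m)$, and therefore $\DL^\circ(P_m) \le p-1 = \frac{m-1}{2}$.

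For the matching lower bound in the odd case, define $\phi : \zed_{2p} \rightarrow V(P_m)$ by $\phi(2j) = v_j$ and $\phi(2j+1) = v_{p+j}$ for $0 \le j \le p-1$; this interleaves the ``left half'' $v_0,\dots,v_{p-1}$ with the ``right half'' $v_p,\dots,v_{2p-1}$. One checks directly that $d(\phi(2j),\phi(2j+1)) = p$ for $0 \le j \le p-1$, that $d(\phi(2j+1),\phi(2j+2)) = p-1$ for $0 \le j \le p-2$, and that the wrap-around distance is $d(\phi(2p-1),\phi(0)) = d(v_{2p-1},v_0) = 2p-1$; all of these are $\ge p-1 = \frac{m-1}{2}$, so $\phi$ is a $\frac{m-1}{2}$-circular-dispersed labelling and $\DL^\circ(P_m) = \frac{m-1}{2}$. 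All steps are short; the only point requiring genuine care is verifying, in the odd case, that the two central vertices are uniquely eccentric so that Lemma~\ref{two.lem} applies, the rest being routine distance computations in the explicit labellings.
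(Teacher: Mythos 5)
Your proof is correct and follows essentially the same route as the paper: both use Theorem~\ref{paths.thm} together with Lemma~\ref{circ.lem}, verify that the even-$m$ labelling from Theorem~\ref{paths.thm} already satisfies the wrap-around condition $d(m+1,1)=\tfrac{m}{2}$, and invoke Lemma~\ref{two.lem} via the uniquely eccentric central vertices to get the upper bound $\tfrac{m-1}{2}$ for odd $m$. The only difference is cosmetic: for the odd-case lower bound the paper takes the $\tfrac{m-1}{2}$-circular labelling of $P_{m-1}$ and attaches a new vertex labelled $m+1$ at the left end, whereas you write down a fresh interleaving $\phi(2j)=v_j$, $\phi(2j+1)=v_{p+j}$; both witnesses check out.
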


\begin{proof}
We already have shown that  $\DL(P_m) = r(P_m)$ in Theorem \ref{paths.thm}. For $m$ even, the labelling 
\[ 2 \:\:\: 4 \:\:\: \cdots \:\:\: m \:\:\: 1 \:\:\: 3 \:\:\: \cdots \:\:\: m+1\]
given in Theorem \ref{paths.thm} 
has $d(1,m+1)=\frac{m}{2}$. Hence, $\DL^\circ(P_m)= \DL(P_m) = \frac{m}{2}$ when $m$ is even. 


However, when $m \geq 3$ is odd, $P_m$ contains a uniquely eccentric central vertex (actually, it contains two such vertices).
Hence, Lemma \ref{two.lem} asserts that
$\DL^\circ(P_m) \le r(P_m) - 1 = (m-1)/2$.
To achieve a $\frac{m-1}{2}$-circular dispersed labelling of $P_m$, we start with the above labelling of $P_{m-1}$ (on $m$ vertices) and attach a new vertex labelled 
 $m+1$ to the left end of the path.
\end{proof}

\begin{Theorem}
\label{gridprod.thm}
Suppose $\gcd(m,n) = 1$, $m \geq 3$ and $n \geq 3$. Then 
\[ 
\mathsf{DL}^{\circ}(L_{m,n}) \geq
\begin{cases}
\frac{m+n}{2} - 1 & \text{if $m,n$ are both odd}\\
\frac{m+n-3}{2} & \text{if $m+n$ is odd}.
\end{cases}
\]
\end{Theorem}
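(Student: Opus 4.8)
The plan is to apply the product construction of Theorem \ref{product.thm} together with the circular-dispersed labellings of paths established in Lemma \ref{circpath.lem}, using the isomorphism $L_{m,n} \cong P_{m-1} \, \Box \, P_{n-1}$. Since $\gcd(m,n) = 1$, the numbers of vertices $|V(P_{m-1})| = m$ and $|V(P_{n-1})| = n$ are relatively prime, so Theorem \ref{product.thm} gives
\[
\mathsf{DL}^{\circ}(L_{m,n}) = \mathsf{DL}^{\circ}(P_{m-1} \, \Box \, P_{n-1}) \geq \mathsf{DL}^{\circ}(P_{m-1}) + \mathsf{DL}^{\circ}(P_{n-1}).
\]
It then remains to plug in the values of $\mathsf{DL}^{\circ}(P_{m-1})$ and $\mathsf{DL}^{\circ}(P_{n-1})$ from Lemma \ref{circpath.lem} in each parity case.

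First I would handle the case where $m$ and $n$ are both odd. Then $P_{m-1}$ and $P_{n-1}$ each have an even number of edges ($m-1$ and $n-1$ respectively), so Lemma \ref{circpath.lem} gives $\mathsf{DL}^{\circ}(P_{m-1}) = (m-1)/2$ and $\mathsf{DL}^{\circ}(P_{n-1}) = (n-1)/2$. Adding these yields $(m-1)/2 + (n-1)/2 = (m+n)/2 - 1$, which is exactly the claimed bound. Next, for the case $m+n$ odd, exactly one of $m, n$ is even; say $m$ is even and $n$ is odd (the other subcase is symmetric). Then $P_{m-1}$ has an odd number of edges $m-1 \geq 3$, so $\mathsf{DL}^{\circ}(P_{m-1}) = (m-2)/2$, while $P_{n-1}$ has an even number of edges $n-1$, so $\mathsf{DL}^{\circ}(P_{n-1}) = (n-1)/2$. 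Their sum is $(m-2)/2 + (n-1)/2 = (m+n-3)/2$, matching the second case of the statement.

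The one point that needs a small amount of care — and is the closest thing to an obstacle — is verifying the hypotheses of Lemma \ref{circpath.lem} on the parities and the lower bound $m-1 \geq 3$ (equivalently $m \geq 4$) whenever the odd-edge formula is invoked; the assumption $m, n \geq 3$ in the statement, combined with $\gcd(m,n) = 1$ (which forbids $m = n = 3$ and in the mixed-parity case forces the even one to be at least $4$), is exactly what makes this work. I would also note explicitly that $\mathsf{DL}^{\circ}(L_{m,n}) = \mathsf{DL}^{\circ}(P_{m-1} \, \Box \, P_{n-1})$ follows because dispersed-labelling parameters depend only on the isomorphism class of the graph (distances are preserved under isomorphism). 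Beyond that, the proof is a direct substitution with no further ingredients needed.

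\begin{proof}
Recall that $L_{m,n}$ is isomorphic to $P_{m-1} \, \Box \, P_{n-1}$, where $P_{m-1}$ has $m$ vertices and $P_{n-1}$ has $n$ vertices. Since dispersed-labelling parameters are invariants of the isomorphism class of a graph, $\mathsf{DL}^{\circ}(L_{m,n}) = \mathsf{DL}^{\circ}(P_{m-1} \, \Box \, P_{n-1})$. As $\gcd(m,n) = 1$, Theorem \ref{product.thm} applies and gives
\[
\mathsf{DL}^{\circ}(L_{m,n}) \geq \mathsf{DL}^{\circ}(P_{m-1}) + \mathsf{DL}^{\circ}(P_{n-1}).
\]

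Suppose first that $m$ and $n$ are both odd. Then $P_{m-1}$ and $P_{n-1}$ have an even number of edges, namely $m-1$ and $n-1$, so Lemma \ref{circpath.lem} gives $\mathsf{DL}^{\circ}(P_{m-1}) = (m-1)/2$ and $\mathsf{DL}^{\circ}(P_{n-1}) = (n-1)/2$. Therefore
\[
\mathsf{DL}^{\circ}(L_{m,n}) \geq \frac{m-1}{2} + \frac{n-1}{2} = \frac{m+n}{2} - 1.
\]

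Now suppose $m+n$ is odd. Then exactly one of $m, n$ is even; without loss of generality say $m$ is even and $n$ is odd. Since $\gcd(m,n) = 1$ and $m \geq 3$, in fact $m \geq 4$, so $P_{m-1}$ has an odd number of edges $m-1 \geq 3$; by Lemma \ref{circpath.lem}, $\mathsf{DL}^{\circ}(P_{m-1}) = (m-1)/2 - 1/2 = (m-2)/2$. Wait — Lemma \ref{circpath.lem} states that for odd $m-1 \geq 3$, $\mathsf{DL}^{\circ}(P_{m-1}) = (m-2)/2$. Also $P_{n-1}$ has an even number of edges $n-1$, so $\mathsf{DL}^{\circ}(P_{n-1}) = (n-1)/2$. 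Hence
\[
\mathsf{DL}^{\circ}(L_{m,n}) \geq \frac{m-2}{2} + \frac{n-1}{2} = \frac{m+n-3}{2}.
\]
This completes the proof.
\end{proof}
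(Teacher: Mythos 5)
Your proof is correct and is exactly the paper's argument: the paper's own proof of this theorem consists of the single line ``This is an immediate application of Theorem \ref{product.thm} and Lemma \ref{circpath.lem},'' and you have simply carried out the substitution explicitly (including the useful check that evenness of $m$ together with $m\geq 3$ forces $m\geq 4$, so the odd-edge case of Lemma \ref{circpath.lem} applies). The only blemish is the stray ``Wait ---'' self-correction in the second case, which should be deleted before submission.
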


\begin{proof}
This is an immediate application of Theorem \ref{product.thm} and Lemma \ref{circpath.lem}.
\end{proof}

We do not consider the case where both $m$ and $n$ are even in Theorem \ref{gridprod.thm}. This is because
$\gcd(m,n) \geq 2$ when $m$ and $n$ are even and hence the hypotheses cannot be satisfied. 

\begin{Corollary}
Suppose $\gcd(m,n) = 1$, $m \geq 3$ is odd and $n \geq 3$ is odd. Then 
\[ 
\mathsf{DL}^{\circ}(L_{m,n}) =
\frac{m+n}{2} - 1.\]
\end{Corollary}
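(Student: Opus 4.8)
The plan is to combine the lower bound from Theorem~\ref{gridprod.thm} with a matching upper bound. The lower bound is immediate: since $m$ and $n$ are both odd with $\gcd(m,n)=1$, Theorem~\ref{gridprod.thm} gives $\mathsf{DL}^\circ(L_{m,n}) \ge \frac{m+n}{2} - 1$. So the only work is the upper bound $\mathsf{DL}^\circ(L_{m,n}) \le \frac{m+n}{2} - 1$.

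For the upper bound, first I would recall that when $m$ and $n$ are both odd, Lemma~\ref{gridradius.lem} tells us $r(L_{m,n}) = \frac{m+n}{2} - 1$. Now apply Lemma~\ref{circ.lem} together with Theorem~\ref{Thm4}: $\mathsf{DL}^\circ(L_{m,n}) \le \mathsf{DL}(L_{m,n}) \le r(L_{m,n}) = \frac{m+n}{2}-1$. Combined with the lower bound, this pins down $\mathsf{DL}^\circ(L_{m,n}) = \frac{m+n}{2} - 1$ exactly. (Alternatively, one could invoke Theorem~\ref{oddm.thm}, which gives $\mathsf{DL}(L_{m,n}) = \frac{m+n}{2}-1$ for odd $m,n \ge 3$, and again use $\mathsf{DL}^\circ \le \mathsf{DL}$; either route works and neither requires new ideas.)

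There is essentially no obstacle here — the statement is a corollary in the literal sense, just a sandwich between two bounds already established in the paper. The only thing to be slightly careful about is confirming that the hypotheses line up: $\gcd(m,n)=1$ and $m,n$ odd with $m,n \ge 3$ is exactly what Theorem~\ref{gridprod.thm} needs for the lower bound, and odd $m,n$ is exactly the case of Lemma~\ref{gridradius.lem}(1) giving the radius formula, so everything fits without any case analysis or extra construction.

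\begin{proof}
By Theorem~\ref{gridprod.thm}, since $m$ and $n$ are both odd and $\gcd(m,n) = 1$, we have $\mathsf{DL}^\circ(L_{m,n}) \ge \frac{m+n}{2} - 1$. For the reverse inequality, Lemma~\ref{gridradius.lem}(1) gives $r(L_{m,n}) = \frac{m+n}{2} - 1$ when $m,n$ are both odd, so Lemma~\ref{circ.lem} and Theorem~\ref{Thm4} yield $\mathsf{DL}^\circ(L_{m,n}) \le \mathsf{DL}(L_{m,n}) \le r(L_{m,n}) = \frac{m+n}{2} - 1$. Hence $\mathsf{DL}^\circ(L_{m,n}) = \frac{m+n}{2} - 1$.
\end{proof}
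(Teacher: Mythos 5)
Your proof is correct and takes essentially the same route as the paper: the lower bound from Theorem~\ref{gridprod.thm} and the upper bound via $\mathsf{DL}^{\circ}(L_{m,n}) \le \mathsf{DL}(L_{m,n})$ (Lemma~\ref{circ.lem}). The only cosmetic difference is that the paper caps $\mathsf{DL}(L_{m,n})$ using its exact value from Theorem~\ref{oddm.thm}, whereas you use the radius bound of Theorem~\ref{Thm4} together with Lemma~\ref{gridradius.lem}(1) --- the alternative you yourself note in parentheses is precisely the paper's argument.
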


\begin{proof}
From Theorem \ref{oddm.thm}, we have $\mathsf{DL}(L_{m,n}) =
\frac{m+n}{2} - 1$ when $n\geq 3$ and $m\geq 3$ are odd. 
Lemma \ref{circ.lem} shows that $\mathsf{DL}^{\circ}(L_{m,n}) \leq \mathsf{DL}(L_{m,n})$. Finally, 
Theorem \ref{gridprod.thm} shows that $\mathsf{DL}^{\circ}(L_{m,n}) \geq
\frac{m+n}{2} - 1 $ if $n\geq 3$ and $m\geq 3$ are odd and $\gcd(m,n) = 1$. 
\end{proof}

\section{Degree-based bounds}
\label{sec4}


We now consider a degree-based lower bound on $\DL(G)$. Suppose $G$ has $n$ vertices.
For $x\in V$  and $i\ge 0$, let 
$$\eta_i(x) = \left| \left\{ y\in V : \ d(x,y) = i\right\} \right| ,$$and
$$ \kappa(x) = \max \left\{ j : \sum_{i=0}^j \eta_i(x) \le n/2 \right\}.$$
For example, in $\QQ_m$, it is easy to see that $\eta_i(x)= \binom{m}{i}$  and $\kappa(x) = \lfloor \frac{m-1}{2} \rfloor$ for each vertex $x$.

\begin{Lemma}
\label{Lem5}
$\DL(G) \ge \min \{ 1+ \kappa(x) : x \in V(G) \}$.
\end{Lemma}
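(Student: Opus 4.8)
The plan is to prove this by a greedy (equivalently, Dirac-type) argument applied to the graph $H_k(G)$ for $k = \min\{1+\kappa(x) : x \in V(G)\}$, invoking Theorem~\ref{Thm1}. Recall that $\DL(G) \ge k$ if and only if $H_k(G)$ has a hamiltonian path, so it suffices to show $H_k(G)$ has a hamiltonian path. The key observation is a degree bound in $H_k(G)$: for a vertex $x$, the number of vertices at distance at least $k$ from $x$ is
\[
\deg_{H_k(G)}(x) = n - \sum_{i=0}^{k-1} \eta_i(x).
\]
By the choice of $k \le 1+\kappa(x)$, we have $k - 1 \le \kappa(x)$, so by definition of $\kappa(x)$, $\sum_{i=0}^{k-1}\eta_i(x) \le n/2$, whence $\deg_{H_k(G)}(x) \ge n/2$ for every vertex $x$.

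Now I would appeal to a classical result: a graph on $n$ vertices with minimum degree at least $n/2$ is hamiltonian (Dirac's theorem), hence in particular contains a hamiltonian path. Strictly speaking Dirac requires minimum degree $\ge n/2$ and $n \ge 3$; for a hamiltonian \emph{path} one can instead use Ore's theorem or the even weaker bound that minimum degree $\ge (n-1)/2$ suffices for a hamiltonian path (Erd\H{o}s--Gallai / folklore). Since here we get minimum degree $\ge n/2 \ge (n-1)/2$, the hamiltonian path is guaranteed as long as $H_k(G)$ has at least two vertices, which is automatic. Combining this with Theorem~\ref{Thm1} yields $\DL(G) \ge k = \min\{1+\kappa(x) : x\in V(G)\}$, as desired.

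The steps, in order, are: (1) set $k = \min\{1+\kappa(x):x\in V\}$; (2) compute $\deg_{H_k(G)}(x) = n - \sum_{i=0}^{k-1}\eta_i(x)$ and use $k-1 \le \kappa(x)$ together with the definition of $\kappa$ to get this is $\ge n/2$; (3) invoke a Dirac/Ore-type theorem to conclude $H_k(G)$ has a hamiltonian path; (4) apply Theorem~\ref{Thm1}. I do not anticipate a serious obstacle here — the argument is short — but the one point requiring a little care is the edge case $n$ small (e.g. $n=1$ or $n=2$) and making sure the inequality $k-1\le\kappa(x)$ is used correctly for \emph{every} $x$ (it is the minimum over $x$ that defines $k$, so $k \le 1+\kappa(x)$ for all $x$, giving the uniform degree bound). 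One should also note that $\sum_{i=0}^{k-1}\eta_i(x)$ counts $x$ itself (the $i=0$ term is $1$), so $\deg_{H_k(G)}(x)$, which excludes $x$, is exactly $n$ minus that sum; this bookkeeping is what makes the $n/2$ threshold line up cleanly.
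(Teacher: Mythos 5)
Your proposal is correct and follows essentially the same route as the paper: the lower bound $\deg_{H_k(G)}(x) \ge n/2$ from the definition of $\kappa$, then Dirac's theorem to get hamiltonicity of $H_k(G)$, then Theorem~\ref{Thm1}. The extra bookkeeping you include (the $i=0$ term counting $x$ itself, and the uniformity of $k \le 1+\kappa(x)$ over all $x$) is exactly the content the paper leaves implicit.
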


\begin{proof}
For any vertex $x$,  there are at least $n/2$ vertices $y$  such that $d(x,y) \geq 1+ \kappa(x)$.  
Therefore, by Dirac's Theorem (e.g., see \cite[Theorem 18.4]{BM}), $H_k(G)$ is hamiltonian for $k = \min \{ 1+ \kappa(x) :x \in V(G) \}$. Now apply Theorem \ref{Thm1}.
\end{proof}

\begin{Theorem}
\label{Thm6}
If $G$ has $n$ vertices and maximum degree $\Delta > 2$, then 
\begin{equation}
\label{eq1} \DL(G) \ge 1 + \log_{\Delta-1} \left[ 1 + \frac{ (n-2)(\Delta-2) }{2\Delta} \right].
\end{equation}
\end{Theorem}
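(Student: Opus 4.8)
The plan is to deduce this from Lemma \ref{Lem5} by bounding $\kappa(x)$ from below for every vertex $x$, using only the maximum degree $\Delta$. The key observation is that in any graph with maximum degree $\Delta > 2$, the number of vertices within distance $j$ of a fixed vertex $x$ cannot grow faster than in the infinite $\Delta$-regular tree. Explicitly, $\eta_0(x) = 1$, $\eta_1(x) \le \Delta$, and for $i \ge 1$ each vertex at distance $i$ has at most $\Delta - 1$ neighbours at distance $i+1$, so $\eta_{i+1}(x) \le (\Delta-1)\eta_i(x)$, giving $\eta_i(x) \le \Delta(\Delta-1)^{i-1}$ for $i \ge 1$. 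Summing the geometric series,
\[
\sum_{i=0}^{j} \eta_i(x) \le 1 + \Delta \cdot \frac{(\Delta-1)^j - 1}{(\Delta-1) - 1} = 1 + \frac{\Delta\left[(\Delta-1)^j - 1\right]}{\Delta - 2}.
\]
(Here we use $\Delta > 2$ so the denominator is positive and nonzero.)

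The next step is to find the largest $j$ for which the right-hand side is guaranteed to be at most $n/2$; any such $j$ is then a lower bound for $\kappa(x)$. Setting the displayed bound $\le n/2$ and solving for $j$: we need $(\Delta-1)^j \le 1 + \frac{(n/2 - 1)(\Delta-2)}{\Delta} = 1 + \frac{(n-2)(\Delta-2)}{2\Delta}$. Taking logarithms base $\Delta - 1$ (valid and monotone since $\Delta - 1 \ge 2 > 1$), this holds whenever $j \le \log_{\Delta-1}\!\left[1 + \frac{(n-2)(\Delta-2)}{2\Delta}\right]$. Hence
\[
\kappa(x) \ge \left\lfloor \log_{\Delta-1}\!\left[1 + \frac{(n-2)(\Delta-2)}{2\Delta}\right] \right\rfloor,
\]
and since this lower bound is uniform over $x \in V(G)$, Lemma \ref{Lem5} yields $\DL(G) \ge 1 + \kappa(x) \ge 1 + \log_{\Delta-1}\!\left[1 + \frac{(n-2)(\Delta-2)}{2\Delta}\right]$ after dropping the floor (which only decreases the value, so the inequality $\DL(G) \ge 1 + (\text{that logarithm})$ still needs a small justification — one argues that $\kappa(x)$ is an integer $\ge$ a real number $r$ implies $\kappa(x) \ge \lceil r \rceil \ge r$, wait, more carefully: $\kappa(x) \ge \lfloor r \rfloor$, and $1 + \lfloor r\rfloor \ge 1 + r$ is false in general, so instead I note that $\DL(G)$ is an integer $\ge 1 + \lfloor r \rfloor > r$, and the theorem's right side $1 + \log_{\Delta-1}[\cdots]$ should be compared directly; I will present the clean bound $\DL(G) \ge 1 + \lfloor r \rfloor$ and observe $1 + \lfloor r \rfloor \ge 1 + r - 1 = r$... ).

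The cleanest route, which I would actually write, avoids floor-function fuss entirely: I will show directly that if $k$ is the integer $k = \left\lceil 1 + \log_{\Delta-1}[1 + \tfrac{(n-2)(\Delta-2)}{2\Delta}] \right\rceil$, hmm — still awkward. The genuinely clean statement is: for $k = 1 + \kappa(x)$ we have $\sum_{i=0}^{k-1}\eta_i(x) \le n/2$, hence at least $\lceil n/2 \rceil$ vertices lie at distance $\ge k$, so by Dirac's theorem (exactly as in Lemma \ref{Lem5}) $H_k(G)$ is Hamiltonian and $\DL(G) \ge k$. Combined with $\kappa(x) \ge \log_{\Delta-1}[1 + \tfrac{(n-2)(\Delta-2)}{2\Delta}] - 1$ rearranged from the tree bound — wait, that off-by-one is exactly what makes it work: from $\sum_{i=0}^{j}\eta_i(x) \le 1 + \frac{\Delta[(\Delta-1)^j-1]}{\Delta-2} \le n/2$ we get $\kappa(x) \ge j$ for $j = \lfloor \log_{\Delta-1}(\cdots)\rfloor$, so $1 + \kappa(x) \ge 1 + \lfloor \log_{\Delta-1}(\cdots)\rfloor$, and since the left side of \eqref{eq1} is an integer while $1 + \log_{\Delta-1}(\cdots) \le 1 + \lceil\log_{\Delta-1}(\cdots)\rceil$... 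I anticipate the main obstacle is precisely this rounding bookkeeping: reconciling the integer-valued $\DL(G)$ and $\kappa(x)$ with the real-valued logarithmic expression in \eqref{eq1}. I expect the paper resolves it by noting $\DL(G) \ge 1 + \kappa(x)$ and $\kappa(x) + 1 > \log_{\Delta-1}[\cdots]$ when the inequality defining $\kappa$ is strict at $j = \kappa(x)+1$, or simply by the monotonicity argument that $\DL(G)$, being an integer at least $1 + \lfloor r \rfloor$, satisfies $\DL(G) \ge 1 + \lfloor r\rfloor \ge \lceil 1 + r\rceil - 1 \ge$ — in any case, the substantive mathematics (tree growth bound, geometric sum, Dirac via Lemma \ref{Lem5}) is routine, and only this final inequality-chasing requires care.
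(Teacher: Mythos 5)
Your derivation follows the paper's proof step for step: the bound $\eta_i(x) \le \Delta(\Delta-1)^{i-1}$, the geometric sum, and the appeal to Lemma \ref{Lem5} are exactly the authors' argument. The place where you stall---passing from $\kappa(x) \ge \lfloor r \rfloor$, where $r = \log_{\Delta-1}\bigl[1 + \tfrac{(n-2)(\Delta-2)}{2\Delta}\bigr]$, to the unfloored conclusion $\DL(G) \ge 1 + r$---is a genuine gap, not bookkeeping you have failed to see through. What the method actually yields is $\DL(G) \ge 1 + \lfloor r\rfloor$ (equivalently $\DL(G) > r$, since $1 + \lfloor r\rfloor > r$), and this does not imply $\DL(G) \ge 1+r$ unless $r$ is an integer. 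The paper's own proof hides the same gap: after setting up $1 + \Delta + \cdots + \Delta(\Delta-1)^{\ell} \le n/2$, it asserts this ``can be rewritten as $\ell+1 \ge \log_{\Delta-1}[\cdots]$,'' but the algebra in fact gives $\ell+1 \le \log_{\Delta-1}[\cdots]$ (the sum increases with $\ell$, so the admissible $\ell$ are the small ones); with the inequality pointed the right way one obtains only $\ell+2 = 1 + \lfloor r\rfloor$.

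Moreover, the missing step cannot be supplied, because \eqref{eq1} as stated is false in general. For the Petersen graph, $n=10$ and $\Delta=3$, so the right-hand side of \eqref{eq1} is $1 + \log_2\bigl(1+\tfrac{8}{6}\bigr) = 1+\log_2(7/3) > 2$, which would force $\DL(G)\ge 3$; but the Petersen graph has radius $2$, so $\DL(G)\le 2$ by Theorem \ref{Thm4}. (Direct computation agrees: $\eta_0=1$, $\eta_1=3$, $\eta_2=6$, so $\kappa(x)=1$ and Lemma \ref{Lem5} gives exactly $\DL(G)\ge 2$.) So do not keep trying to repair the final inequality; the honest conclusion of this approach is $\DL(G) \ge 1 + \bigl\lfloor \log_{\Delta-1}\bigl[1+\tfrac{(n-2)(\Delta-2)}{2\Delta}\bigr]\bigr\rfloor$, i.e.\ the strict bound $\DL(G) > \log_{\Delta-1}[\cdots]$, and the theorem (and its use in Corollary \ref{cor4.3}, where the unfloored form is what delivers $\DL(T_m)\ge m$ rather than $m-1$) needs to be restated accordingly. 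Everything in your proposal up to that point is correct, but the unresolved deliberations at the end must be replaced by this corrected conclusion before it stands as a proof.
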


\begin{proof}
For any $x\in V(G)$ and $i\ge 1$, $\eta_i(x) \le \Delta (\Delta-1)^{i-1}$. In order to apply 
Lemma \ref{Lem5}, we compute the largest value of $\ell$ such that  
$$ 1 + \Delta + \Delta(\Delta-1) + \cdots + \Delta (\Delta-1)^{\ell} \le n/2. $$
This inequality is equivalent to  
\[1+ \frac{ \Delta (1- (\Delta-1)^{\ell+1} )}{ 2-\Delta} \le \frac{n}{2}.\]
Elementary algebra tells us that this can be rewritten as 
$$\ell+1 \ge   \log_{\Delta-1} \left[ 1 + \frac{ (n-2)(\Delta-2) }{2\Delta} \right].$$
Since $\kappa(x) \geq\ell+1$, it
therefore follows from Lemma \ref{Lem5} that $\DL(G) \geq \ell + 2$ and hence the inequality (\ref{eq1}) holds.
\end{proof}


\begin{Corollary}
\label{cor4.3} If $G$ has $n$ vertices and maximum degree three, then  \[\DL(G) \geq \log_2 \left( \frac{n+4}{6} \right) +1.\]
\end{Corollary}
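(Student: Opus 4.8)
The plan is to derive this directly from Theorem~\ref{Thm6} by substituting $\Delta = 3$ and simplifying the resulting expression. First I would set $\Delta = 3$ in inequality~(\ref{eq1}). The base of the logarithm becomes $\Delta - 1 = 2$, the factor $\Delta - 2$ in the numerator becomes $1$, and the denominator $2\Delta$ becomes $6$. So the bracketed quantity $1 + \frac{(n-2)(\Delta-2)}{2\Delta}$ collapses to $1 + \frac{n-2}{6}$.

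Next I would combine the terms inside the bracket over a common denominator: $1 + \frac{n-2}{6} = \frac{6 + n - 2}{6} = \frac{n+4}{6}$. Substituting back into~(\ref{eq1}) yields $\DL(G) \ge 1 + \log_2\!\left(\frac{n+4}{6}\right)$, which is exactly the claimed bound (up to the harmless reordering of the summands $1 + \log_2(\cdots)$ versus $\log_2(\cdots) + 1$).

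The only subtlety worth a remark is the hypothesis: Theorem~\ref{Thm6} requires $\Delta > 2$, and here $\Delta = 3 > 2$, so the theorem applies and no separate argument is needed. There is essentially no obstacle here — the corollary is a one-line algebraic specialization of Theorem~\ref{Thm6} — so I would simply present the substitution $1 + \frac{(n-2)(3-2)}{2\cdot 3} = 1 + \frac{n-2}{6} = \frac{n+4}{6}$ and invoke Theorem~\ref{Thm6}.
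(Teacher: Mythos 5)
Your proposal is correct and matches the paper's proof exactly: the paper also just applies Theorem~\ref{Thm6} with $\Delta = 3$, and your algebra $1 + \frac{(n-2)(3-2)}{2\cdot 3} = \frac{n+4}{6}$ is the right (and only) simplification needed.
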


\begin{proof}
Apply Theorem \ref{Thm6}  with $\Delta = 3$. 
\end{proof}

To illustrate the application of Corollary
\ref{cor4.3}, we consider complete binary trees, which we studied in Section \ref{cbt.sec}. 
A complete binary tree of depth $m$, denoted $T_{m}$, has $2^{m+1}-1$ vertices and its maximum degree is three.
Hence, Corollary \ref{cor4.3} asserts that 
\[\DL(T_{m}) \geq \log_2 \left( \frac{2^{m+1}+3}{6} \right) +1 > m - 0.6.\]
Since $\DL(T_{m})$ is an integer, it follows that $\DL(T_{m}) \geq m$.
The exact value of $\DL(T_{m})$, as given by Theorem \ref{cbt.thm}, is $m$. Hence, in the special case of complete binary trees, Corollary
\ref{cor4.3} is tight.



\section{Further questions}
\label{sec5}

There are many interesting  questions regarding dispersed labellings. We list some now.

\begin{enumerate}
\item Which graphs $G$ have $\mathsf{DL}(G) = r(G)$?
\item We note that $\mathsf{DL}(G) \in \{r(G), r(G)-1\}$ for all the classes of graphs considered in Section \ref{sec2}.
It is interesting to ask if there is an infinite family of graphs $G_1, G_2, \dots$ such that
\[\lim _{i \rightarrow 
\infty} (r(G_i) - \mathsf{DL}(G_i)) = \infty.\]
John Haslegrave (private communication) has answered this question in the affirmative, as follows.
Suppose we attach two paths of length $k-1$ to distinct vertices of a clique of order $2k$. This graph has radius $k$, but for any ordering of vertices some two consecutive vertices are in the clique. Hence, there is no $2$-dispersed labelling. 

We note that Deepak Bal and Sarah Acquaviva also have found a construction that provides a positive answer to this question.
\item Is there an efficient algorithm to find ``good'' dispersed labellings of graphs (i.e., labellings  
where  $\mathsf{DL}(G)$ is ``close to'' $r(G)$) for certain classes of graphs (e.g., trees)? 
\item Which (rooted) binary trees $T$ of depth $m$ have $\mathsf{DL}(T) < m$?
\item Is there a generalization of the product construction (Theorem \ref{product.thm}) that handles cases where $\gcd(m,n) > 1$?
\item Is there a product-type construction that yields the correct values for $\DL(\QQ_n)$?
\item There exist  sufficient conditions for a graph to be hamiltonian other than Dirac's Theorem. Do any of these lead to interesting lower bounds on $\mathsf{DL}(G)$ for certain graphs $G$?
\end{enumerate}

\section*{Acknowledgements}

We thank the referees for detailed and valuable comments. In particular, we are grateful to one referee for pointing out an error in Theorem 2.10 in an earlier version of the paper. We also thank John Haslegrave, Deepak Bal and Sarah Acquaviva for communicating their affirmative answers to question 2 in Section \ref{sec5}.

\end{document}